\newtheorem{theorem}{Theorem}[section]
\newtheorem{definition}[theorem]{Definition}
\numberwithin{equation}{section}
\newtheorem{lemma}[theorem]{Lemma}
\newtheorem{proposition}[theorem]{Proposition}
\newtheorem{corollary}[theorem]{Corollary}
\newtheorem{remark}[theorem]{Remark}
\newtheorem{example}[theorem]{Example}
\newtheorem{claim}[theorem]{Claim}
\numberwithin{equation}{section}
\def\N{\mathbb{N}}
\def\Z{\mathbb{Z}}
\def\R{\mathbb{R}}
\def\C{\mathbb{C}}
\def\T{\mathcal{T}}
\def\F{\mathcal{F}}
\def\B{\mathcal{B}}
\renewcommand{\phi}{\varphi}
\renewcommand{\epsilon}{\varepsilon}
\newcommand{\1}{{\text{\Large $\mathfrak 1$}}}
\newcommand{\vol}{\mathrm{vol}}
\renewcommand{\emptyset}{\varnothing}
\def\C{{\mathcal C}}
\newcommand{\til}{\widetilde}
\newcommand{\pr}[1]{\mathbb{P}\!\left(#1\right)}
\newcommand{\E}[1]{\mathbb{E}\!\left[#1\right]}
\newcommand{\prcond}[3]{\mathbb{P}_{#3}\!\left(#1\;\middle\vert\;#2\right)}
\newcommand{\econd}[2]{\mathbb{E}\!\left[#1\;\middle\vert\;#2\right]}
\newcommand{\tn}{|\kern-.1em|\kern-0.1em|}
\newcommand{\lowdim}{\underline{\dim}_M}
\newcommand{\updim}{\overline{\dim}_M}
\newcommand\be{\begin{equation}}
\newcommand\ee{\end{equation}}
\begin{document}

\title{Minkowski dimension of Brownian motion with drift}

\author{
Philippe H.\ A.\ Charmoy\thanks{University of Oxford, Oxford, UK; charmoy@maths.ox.ac.uk} \and Yuval Peres\thanks{Microsoft Research, Redmond, Washington, USA; peres@microsoft.com} \and Perla Sousi\thanks{University of Cambridge, Cambridge, UK;   p.sousi@statslab.cam.ac.uk}
}
\maketitle
\begin{abstract}

We study fractal properties of the image and the graph of Brownian motion in $\R^d$ with an arbitrary c{\`a}dl{\`a}g drift $f$. We prove that the Minkowski (box) dimension of both the image and the graph of $B+f$ over $A\subseteq [0,1]$ are a.s.\ constants. We then show that for all $d\geq 1$ the Minkowski dimension of $(B+f)(A)$ is at least the maximum of the Minkowski dimension of $f(A)$ and that of $B(A)$.
We also prove analogous results for the graph. For linear Brownian motion, if the drift $f$ is continuous and $A=[0,1]$, then the corresponding inequality for the graph is actually an equality.
\newline
\newline
\emph{Keywords and phrases.} Brownian motion, Minkowski dimension, Wiener sausage.
\newline
MSC 2010 \emph{subject classifications.}
Primary: 60J65; Secondary
60G17, 28A12.  
\end{abstract}

\section{Introduction}

Let $(B_t)$ be a standard Brownian motion in $\R^d$ and $f:[0,1]\to \R^d$ a c{\`a}dl{\`a}g function. By the Cameron-Martin theorem, the law of $B+f$ is equivalent to the law of $B$ when $f$ is in the Dirichlet space
$$
D[0,1] = \left\{ f \in C[0,1]: f(t) = \int_0^t g(s) ds \text{ for some function } g \in L^2[0,1] \right\},
$$
and singular to the law of $B$ otherwise.

In~\cite{PS10} it is shown that if $f$ is any continuous function, then the Hausdorff dimension of the image and the graph of $B+f$ are almost surely constants. In the same paper it is also proved that if $A$ is a closed subset of $[0,1]$, then the Hausdorff dimension of $(B+f)(A)$ is at least $\max\{\dim_H B(A), \dim_H f(A)\}$ and similarly for the dimension of the graph of $B+f$ over $A$.

In this paper we prove analogous results for the Minkowski (or otherwise called box) dimension of the same sets. We would like to emphasize that the presence of the drift $f$ implies that we cannot use techniques relying on self-similarity of the paths.

Before stating our main results, we recall the definition of Minkowski dimension. For other equivalent definitions and properties see~\cite[Definition~3.1]{Falconer}.

\begin{definition}\rm{
Let $A$ be a non-empty bounded subset of $\R^d$. For $\epsilon>0$ let $P(A,\epsilon)$ be the maximum number of disjoint balls of radius $\epsilon$ with centers in $A$:
\[
P(A,\epsilon) = \max \left\{k: \exists \ x_1,\ldots, x_k \in A \text{ s.t.\ } \B(x_i,\epsilon)\cap \B(x_j,\epsilon) =\emptyset \text{ if } i\neq j\right\}.
\]
The \emph{upper} and \emph{lower Minkowski dimensions} of $A$ are defined as
\[
\updim(A) = \limsup_{\epsilon \to 0} \frac{\log P(A,\epsilon)}{\log\epsilon^{-1}} \ \text{ and } \ \lowdim(A) = \liminf_{\epsilon \to 0} \frac{\log P(A,\epsilon)}{\log\epsilon^{-1}}
\]
respectively. Whenever these two limits are equal, we call the common value the Minkowski dimension of $A$.
}
\end{definition}

Let $f:[0,1] \to \R^d$ be a c{\`a}dl{\`a}g function and $A$ a subset of $[0,1]$.
In this paper we first prove that the Minkowski dimension of the image and the graph of $B+f$ over the set $A$ are a.s.\ constants. The 0-1 law
(Theorem~2.1) from \cite{PS10} used to prove the a.s.\ constancy of the Hausdorff dimension of $(B+f)(A)$
cannot be used to prove the a.s.\ constancy in this case, since the
Minkowski dimension does not satisfy the countable stability property; this means that the Minkowski dimension of a countable union of sets is not in general the supremum of their dimensions.

\begin{theorem}\label{thm:constancydimimage}
	Let $(B_t)$ be a standard Brownian motion in $d$ dimensions. Let $f:[0,1] \to \R^d$ be a c{\`a}dl{\`a}g function and let $A$ be a subset of $[0,1]$. Then, there exist constants $c_1$ and $c_2$ such that, almost surely,
	$$
	\underline{\dim}_M (B+f)(A) = c_1 \ \text{ and } \ \overline{\dim}_M (B+f)(A) = c_2.
	$$
\end{theorem}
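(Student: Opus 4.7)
The plan is to combine finite stability of the upper Minkowski dimension, the Markov property of Brownian motion, and Blumenthal's 0-1 law applied at both $0$ and $1$; as the authors point out, the countable-stability argument used for the Hausdorff dimension in \cite{PS10} is unavailable here.

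For any $0<\delta<\eta<1$, finite stability of $\updim$ gives
\[
Y:=\updim((B+f)(A))=\max\bigl(U_\delta,\,M_{\delta,\eta},\,W_\eta\bigr),
\]
where $U_\delta$, $M_{\delta,\eta}$, $W_\eta$ are the Minkowski dimensions of $(B+f)$ restricted to $A\cap[0,\delta]$, $A\cap[\delta,\eta]$, and $A\cap[\eta,1]$. The Markov property at times $\delta$ and $\eta$, combined with translation-invariance of $\updim$ (subtracting $B_\delta$, resp.\ $B_\eta$, does not change the dimension), allows each of these three to be rewritten as a function of Brownian increments over pairwise disjoint subintervals of $[0,1]$. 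Hence $U_\delta$, $M_{\delta,\eta}$, $W_\eta$ are mutually independent, and so
\[
F_Y(c)=F_{U_\delta}(c)\,F_{M_{\delta,\eta}}(c)\,F_{W_\eta}(c)\qquad\forall c\in\R.
\]

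Next I would send $\delta\downarrow 0$: since $U_\delta\in\mathcal F_\delta$ is non-increasing in $\delta$, its limit $U_{0+}$ lies in $\mathcal F_{0+}$, hence is a.s.\ a constant $c_0$ by Blumenthal's 0-1 law. Using the pathwise time-reversal $\tilde B_t:=B_1-B_{1-t}$, which is again a standard Brownian motion and satisfies $\updim((B+f)(A))=\updim((\tilde B+\hat f)(1-A))$ for $\hat f(s):=-f(1-s)$, Blumenthal at $0$ applied to $\tilde B$ gives $W_\eta\to c_1$ a.s.\ as $\eta\downarrow 0$. Taking $\delta,\eta\downarrow 0$ in the factorised CDF produces
\[
F_Y(c)=\mathbf 1_{c\ge \max(c_0,c_1)}\,F_{M^*}(c),\qquad M^*:=\lim_{\delta,\eta\downarrow 0}M_{\delta,\eta}.
\]

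The main obstacle is then to show $M^*$ is a.s.\ constant. I would iterate the three-piece decomposition on shrinking nested subintervals of $(0,1)$: each new iteration generates, via Markov and translation-invariance, two additional independent ``germ'' random variables whose limits as the endpoints shrink are a.s.\ constants (by Blumenthal applied to suitably shifted Brownian motions). Passing to a countable limit expresses $M^*$ as the supremum of a countable family of a.s.-constant random variables, so $M^*$ is a.s.\ constant and therefore so is $Y$. The same argument with $\limsup$ replaced by $\liminf$ handles $\lowdim$. The delicate step is verifying that the iterative peeling really captures all of $M^*$ rather than merely producing an increasing sequence of lower bounds: this is precisely where the failure of countable stability of $\updim$ forces a careful use of the product-CDF identity, since in general $\updim(\bigcup_n S_n)>\sup_n\updim(S_n)$ is possible.
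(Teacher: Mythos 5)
There is a genuine gap, and it sits exactly where you flag it. For the \emph{upper} Minkowski dimension your reduction is sound up to a point: finite stability gives $Y=\max(U_\delta,M_{\delta,\eta},W_\eta)$, the two end pieces converge to deterministic constants by Blumenthal's law (and time reversal, with $\eta\uparrow 1$ rather than $\eta\downarrow 0$), and continuity of the maximum yields $Y=\max(c_0,c_1,M^*)$ almost surely. But nothing you propose shows $M^*$ is degenerate. The iterated peeling produces, for each $t$ in a \emph{countable} set, an a.s.\ constant germ dimension $D(t)=\lim_{\epsilon\downarrow0}\updim (B+f)(A\cap[t-\epsilon,t+\epsilon])$; since $\updim$ of a countable union can strictly exceed the supremum of the dimensions of the pieces, the supremum of these countably many germs is in general only a lower bound for $M^*$. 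One can show by compactness of $[0,1]$ and finite stability that $\updim (B+f)(A)=\sup_{t\in[0,1]}D(t)$, but that is a supremum over \emph{uncountably} many $t$, each $D(t)$ being constant only off a $t$-dependent null set, and no 0--1 law controls such a supremum. The product-CDF identity does not rescue this: once $c_0,c_1$ are constants it collapses to $F_Y(c)=\1(c\ge\max(c_0,c_1))\,F_{M^*}(c)$, which carries no information about whether $M^*$ is constant. There is also a second, independent error: the lower Minkowski dimension is \emph{not} finitely stable (one only has $\lowdim(S_1\cup S_2)\ge\max\{\lowdim S_1,\lowdim S_2\}$, possibly strictly, because the $\liminf$ of a maximum can exceed the maximum of the $\liminf$s), so for $\lowdim$ your very first decomposition identity already fails and the closing sentence ``the same argument handles $\lowdim$'' is wrong.

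The paper takes a different route precisely to avoid these obstructions. It proves (Lemma~\ref{lem:volconc}) that the sausage volume $V_{B+f}(A,\epsilon)$ concentrates around its mean up to polylogarithmic factors, via a second-moment bound $\E{V^2}\le 2(\E{V})^2$ obtained from a stopping-time/strong Markov argument, together with a splitting of $A$ into $k$ time blocks of equal expected sausage volume whose contributions are independent. Combined with the elementary identity relating $\updim$ and $\lowdim$ to $\vol(A+\B(0,\epsilon))$ (Claim~\ref{cl:minkvolgeneral}), this identifies both dimensions with the deterministic quantities $\limsup$ and $\liminf$ of $\log\E{V_{B+f}(A,\epsilon)}/\log(1/\epsilon)$, which gives constancy for both at once. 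If you wish to keep a 0--1-law framework, you will need some comparable quantitative concentration input for the middle block; soft measurability arguments alone cannot close the gap.
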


For a function $h:[0,1] \to \R^d$ and a set $A\subseteq [0,1]$ we denote by $G_A(h) = \{ (t,h(t)): t\in A\}$ the graph of $h$ over $A$.

\begin{theorem}\label{thm:constancydimgraph}
	Let $(B_t)$ be a standard Brownian motion in $d\geq 1$ dimensions, $f:[0,1] \to \R^d$ a c{\`a}dl{\`a}g function and $A$ a subset of $[0,1]$. Then, there exist constants $c_3$ and $c_4$ such that, almost surely,
	$$
	\underline{\dim}_M G_A(B+f) = c_3 \ \text{ and } \ \overline{\dim}_M G_A(B+f) = c_4.
	$$
\end{theorem}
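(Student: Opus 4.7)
The plan is to adapt the strategy used for Theorem~\ref{thm:constancydimimage}, replacing the image $(B+f)(A)\subseteq\R^d$ by the graph $G_A(B+f)\subseteq[0,1]\times\R^d\subseteq\R^{d+1}$. The two properties of Minkowski dimension we rely on---invariance under isometries of the ambient Euclidean space, and (for the upper dimension) finite stability---apply to graphs in $\R^{d+1}$ exactly as they do to images in $\R^d$.

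For the upper dimension $c_4$, I would first fix $s\in(0,1)$ and use finite stability to write
\begin{equation*}
X := \overline{\dim}_M G_A(B+f) = \max(L_s, R_s),
\quad L_s := \overline{\dim}_M G_{A\cap[0,s]}(B+f),
\quad R_s := \overline{\dim}_M G_{A\cap[s,1]}(B+f).
\end{equation*}
Clearly $L_s\in\F_s$. The composition of the time-shift $u=t-s$ with the spatial translation $y\mapsto y-B_s$ is an isometry of $\R^{d+1}$, under which $G_{A\cap[s,1]}(B+f)$ maps to $G_{(A-s)\cap[0,1-s]}(B^s+f^s)$, where $B^s_u:=B_{s+u}-B_s$ is a Brownian motion independent of $\F_s$ by the Markov property and $f^s(u):=f(s+u)$. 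Hence $R_s$ is independent of $\F_s$ (and in particular of $L_s$), so $\pr{X\le x}=\pr{L_s\le x}\,\pr{R_s\le x}$ for every $s\in(0,1)$.

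Sending $s\downarrow 0$, the monotone limit $L_{0+}$ is $\F_{0+}$-measurable, hence a.s.\ constant by Blumenthal's $0$-$1$ law. Applying the same decomposition to the time-reversed Brownian motion $\hat B_t:=B_{1-t}-B_1$ with drift $\hat f(u):=f(1-u)$ produces a dual Blumenthal constant governing the tail of the graph at $t=1$. I would then insert both constants into the product identity above and, exploiting the monotonicity of $s\mapsto L_s$ (non-decreasing) and $s\mapsto R_s$ (non-increasing), conclude that $\pr{X\le x}\in\{0,1\}$ for every $x$, so $X$ is a.s.\ constant.

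For the lower dimension $c_3$, finite stability fails, and the decomposition only yields the one-sided bound $\underline{\dim}_M G_A\ge\max(\underline{\dim}_M G_{A\cap[0,s]},\underline{\dim}_M G_{A\cap[s,1]})$. I would instead work at the level of the packing counts, using $P(G_A,\epsilon)\le P(G_{A\cap[0,s]},\epsilon)+P(G_{A\cap[s,1]},\epsilon)$ together with the fact that $G_{A\cap[0,s]}(B+f)$ has diameter $O(\sqrt{s\log\log(1/s)})+\sup_{[0,s]}|f|$ by the law of the iterated logarithm, to show that its contribution is asymptotically negligible for the $\liminf$ defining $\underline{\dim}_M$. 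This reduces constancy of $c_3$ to the same Blumenthal-type argument applied to the $[s,1]$-restricted graph. I expect this last reduction to be the main obstacle: as the authors emphasise just before the statement, the $0$-$1$ law from~\cite{PS10} exploits countable stability of Hausdorff dimension, which has no analogue for Minkowski dimension.
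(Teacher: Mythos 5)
Your reduction to Blumenthal's $0$--$1$ law has a genuine gap, and it appears already for the upper dimension, not only for the lower one. Fix $x$ and write $p=\pr{X\le x}=\pr{L_s\le x}\pr{R_s\le x}$ for all $s\in(0,1)$. Letting $s\downarrow 0$ (resp.\ $s\uparrow 1$) and using Blumenthal for $B$ (resp.\ for the reversed motion) only makes the events $\{\lim_{s\downarrow 0}L_s\le x\}$ and $\{\lim_{s\uparrow 1}R_s\le x\}$ trivial. But since $X=\max(L_s,R_s)$ for every fixed $s$, passing to the limit gives $X=\max\bigl(\lim_{s\downarrow0}L_s,\ \lim_{s\downarrow 0}R_s\bigr)$, and the second quantity $R_{0+}=\lim_{s\downarrow 0}\updim G_{A\cap[s,1]}(B+f)$ is \emph{not} measurable with respect to any germ field at $0$ or at $1$; it genuinely depends on the whole path. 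In the only nontrivial case (both germ constants $\le x$) your product identity collapses to $p=\pr{R_{0+}\le x}$, and nothing forces this to be $0$ or $1$: a constant product $g(s)h(s)=p\in(0,1)$ with $g$ non-increasing, $h$ non-decreasing, $g(0+)=h(1-)=1$ is perfectly consistent. The structural reason is that the dimension can be produced by the interior of the time interval (take $f$ smooth near $0$ and $1$ but with a high-dimensional graph on $[1/4,3/4]$); no amount of zooming in at the two endpoints sees this contribution. For the lower dimension your proposed fix is also invalid: the LIL bound on the diameter of $G_{A\cap[0,s]}(B+f)$ does not make its packing number at scale $\epsilon\to0$ negligible -- a set of arbitrarily small diameter can still require of order $\epsilon^{-\gamma}$ balls with $\gamma$ equal to the full dimension, so the $[0,s]$ piece cannot be discarded from the $\liminf$; you correctly flag this as the obstacle, but it is fatal to the whole scheme, not just to the last step.

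The paper avoids time decompositions altogether. Its first proof (Proposition~\ref{pro:general}, applied to $X_t=(t,B_t)$, $g(t)=(t,f(t))$) identifies both $\updim$ and $\lowdim$ of the graph with the deterministic exponents $\limsup$ and $\liminf$ of $\log\E{V_{X+g}(A,\epsilon)}/\log(1/\epsilon)$, via Claim~\ref{cl:minkvolgeneral} and the concentration Lemma~\ref{lem:volconc}, whose second-moment argument uses the strong Markov property and a partition of $A$ into pieces of equal \emph{expected} sausage volume -- this is what controls the bulk of the interval. Its second proof does use a $0$--$1$ law, but the independence structure is over the levels of L\'evy's construction rather than over time: by Corollary~\ref{cor:dimaffinedrift}, subtracting the piecewise affine partial sums $Y_{n-1}$ does not change the graph's Minkowski dimension, so $\dim_M G_A(B+f)$ is a tail random variable of the independent increments $(X_k)$ and Kolmogorov's $0$--$1$ law applies. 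If you want to salvage a $0$--$1$ argument, that change of decomposition is the missing idea.
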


We prove Theorems~\ref{thm:constancydimimage} and~\ref{thm:constancydimgraph} in Section~\ref{sec:0-1law} by relating the Minkowski dimension to the expected volume of the ``sausage" around the graph or the image. In the same section we also give an alternative proof of Theorem~\ref{thm:constancydimgraph} using L{\'e}vy's construction of Brownian motion.

Having established that the Minkowski dimension of the image and the graph of Brownian motion with a c{\`a}dl{\`a}g drift are a.s.\ constants we show that adding a deterministic drift to the Brownian motion cannot decrease the dimension of the image and the graph.

\begin{theorem}\label{thm:minkba}
Let $(B_t)$ be a standard Brownian motion in $d\geq 1$ dimensions. Let $A$ be a subset of $[0,1]$ and $f:[0,1]\to \R^d$ a c{\`a}dl{\`a}g function. Then almost surely
\begin{align*}
\updim (B+f)(A) & \geq \max\{\updim B(A), \updim f(A) \} \\
\lowdim (B+f)(A) & \geq \max\{\lowdim B(A), \lowdim f(A)\}.
\end{align*}
\end{theorem}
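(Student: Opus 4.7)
By Theorem~\ref{thm:constancydimimage}, both $\updim(B+f)(A)$ and $\lowdim(B+f)(A)$ are almost surely constants, so it suffices to establish each of the four inequalities on an event of positive probability. I would prove the two lower bounds (against $B(A)$ and against $f(A)$) by separate arguments, focusing on the upper Minkowski dimension and commenting on the $\lowdim$ adaptations at the end.

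\textbf{Bound against $\updim B(A)$.} The c\`adl\`ag regularity of $f$ gives, for every $\eta > 0$, a finite partition of $[0,1]$ into intervals $I_1,\ldots,I_{K(\eta)}$ on each of which $f$ has oscillation at most $\eta$. By finite stability of $\updim$, $\updim B(A) = \max_i \updim B(A\cap I_i)$, so some $I^* = I^*(\eta)$ satisfies $\updim B(A\cap I^*) = \updim B(A)$. On $I^*$, for $t,s\in A\cap I^*$, $|(B+f)(t)-(B+f)(s)|\geq |B(t)-B(s)|-\eta$, so whenever $\epsilon\geq \eta$ any $2\epsilon$-separated subset of $B(A\cap I^*)$ yields an $\epsilon$-separated subset of $(B+f)(A\cap I^*)\subseteq (B+f)(A)$. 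In particular $P((B+f)(A),\epsilon/2)\geq P(B(A\cap I^*(\eta)),\epsilon)$ for all $\epsilon\geq \eta$, and a diagonal argument over $\eta\to 0$ together with a subsequence of $\epsilon$ realising $\updim B(A)$ in $B(A\cap I^*(\eta))$ gives $\updim(B+f)(A)\geq \updim B(A)$.

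\textbf{Bound against $\updim f(A)$.} I would use Brownian scaling. Partition $[0,1]$ into $M = \lfloor \epsilon^{-2}\rfloor$ sub-intervals $J_k=[k\epsilon^2,(k+1)\epsilon^2]$ of length $\epsilon^2$. The events $E_k = \{\sup_{t\in J_k}|B(t)-B(k\epsilon^2)|\leq \epsilon\}$ are independent, by the independent-increments property, and each has a fixed positive probability $p$ by Brownian scaling. On $E_k$, the set $(B+f)(A\cap J_k)-B(k\epsilon^2)$ lies within Hausdorff distance $\epsilon$ of $f(A\cap J_k)$. Converting to expected sausage volumes as in Section~\ref{sec:0-1law}, the goal is an estimate of the form
\[
\E{\vol\!\left(W_{2\epsilon}(B+f)(A)\right)} \; \geq \; \frac{p}{L}\,\vol\!\left(W_\epsilon f(A)\right),
\]
where $L$ bounds the overlap multiplicity in $\R^d$ of the randomly translated pieces $f(A\cap J_k)+B(k\epsilon^2)$. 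The sausage-volume characterisation of the Minkowski dimension, used already in Section~\ref{sec:0-1law}, then yields $\updim(B+f)(A)\geq \updim f(A)$.

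\textbf{Main obstacle.} The crux of the $f(A)$-bound is controlling the overlap multiplicity $L$: the random walk $(B(k\epsilon^2))_k$ has $\epsilon^{-2}$ steps of typical size $\epsilon$ and is recurrent for $d\leq 2$, so naive multiplicity bounds are too weak, and a useful estimate seems to require an analysis of its local time. A cleaner alternative, parallel to the first bound, is to use finite stability of $\updim$ to pick a single small sub-interval $I$ with $\updim f(A\cap I) = \updim f(A)$ and then apply Brownian scaling only inside $I$, thereby avoiding the need to combine many sub-intervals at all. The $\lowdim$-versions of both inequalities are additionally delicate because $\lowdim$ is not finitely stable, so there need not exist a single sub-interval realising $\lowdim f(A)$ or $\lowdim B(A)$; one must instead argue uniformly over all small $\epsilon$, for example by controlling the packing numbers of $(B+f)(A\cap I_i)$ simultaneously for all $i$ in each partition.
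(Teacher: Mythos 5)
Your proposal does not close either half of the theorem, and both gaps are real. For the bound against $\updim B(A)$: the comparison $P((B+f)(A),\epsilon/2)\geq P(B(A\cap I^*(\eta)),\epsilon)$ is only valid for $\epsilon\geq\eta$, while $\updim$ is a limit superior as $\epsilon\to 0$; the scales along which $B(A\cap I^*(\eta))$ realises its upper Minkowski dimension may all lie below $\eta$, where you have no control, so the proposed diagonal argument does not go through. The natural repair --- taking $\eta=\eta(\epsilon)\leq\epsilon$ and using $P(B(A),\epsilon)\leq\sum_i P(B(A\cap I_i),\epsilon)$ to find a good piece at each scale --- costs a factor $K(\eta)$, the number of intervals of $\eta$-oscillation of $f$, and for a general c\`adl\`ag (even continuous) $f$ this can grow faster than any power of $1/\eta$ (take a modulus of continuity like $1/\log(1/\delta)$), so the loss is not negligible on the dimension scale. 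For $\lowdim$ the approach does not even start, since, as you note, finite stability fails and you give no substitute. The paper proves this half by a completely different route: the a.s.\ dimensions are identified with the growth exponent of the expected sausage volume (Proposition~\ref{pro:general}, via the concentration Lemma~\ref{lem:volconc}), and then Theorem~\ref{thm:wienersausage} applied with $D_s=\B(f(s),\epsilon)$ for $s\in A$ gives $\E{V_{B+f}(A,\epsilon)}\geq\E{V_B(A,\epsilon)}$, which yields both the upper and lower statements at once; your elementary oscillation argument is not a replacement for that inequality.

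For the bound against $\updim f(A)$ you have correctly located the obstruction (the overlap multiplicity $L$ of the translated pieces), but that is precisely the step you leave unproved, and your ``cleaner alternative'' does not remove it: over a subinterval $I$ of fixed length the Brownian fluctuation relevant at scale $\epsilon$ is of order $\sqrt{|I|}$, not $\epsilon$, so $B$ cannot be frozen on $I$; the intervals on which it can be frozen have length $\epsilon^2$ and change with $\epsilon$, so no single subinterval chosen by finite stability serves all scales (and finite stability is again unavailable for $\lowdim$). The missing idea in the paper is to work directly with a packing of $f(A)$ and count collisions among the images of its centres: if $(\B(f(t_i),\epsilon))_i$ is an $\epsilon$-packing, then for each pair either $|t_i-t_j|\geq(2^k\epsilon/\log(1/\epsilon))^2$ when $|f(t_i)-f(t_j)|\asymp 2^k\epsilon$, in which case the Gaussian density bounds $\pr{|(B+f)(t_i)-(B+f)(t_j)|<2\epsilon}$ by $c\log(1/\epsilon)^d 2^{-dk}$, or the time gap is small relative to the spatial gap and a Gaussian tail estimate applies; summing over dyadic annuli, whose cardinalities are controlled by the packing property, gives $\E{N_i}\leq c\log(1/\epsilon)^{d+1}$ (Lemma~\ref{lem:technicalestimate2}). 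Markov's inequality, a greedy removal of colliding balls and Borel--Cantelli then convert a $2^{-k}$-packing of $f(A)$ into a $2^{-k}$-packing of $(B+f)(A)$ of the same size up to a polylogarithmic factor, simultaneously for all large $k$, which gives both the $\updim$ and $\lowdim$ inequalities. This argument exploits the spatial separation of the points $f(t_i)$ rather than any smallness of $B$, and so bypasses the multiplicity/local-time problem that blocks your sausage-volume version.
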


McKean's theorem (see for instance~\cite[Theorem~4.33]{BM}) states that if $A$ is a closed subset of $[0,\infty)$, then $\dim_H B(A) = (2\dim_H A) \wedge d$, where $\dim_H$ stands for the Hausdorff dimension. In the case of Minkowski dimension there cannot be such a formula as the following corollary shows.

\begin{corollary}\label{thm:dimimagelinearBM}
	Let $(B_t)$ be a standard Brownian motion in $d\geq 1$ dimensions and $f:[0,1]\to \R^d$ a c{\`a}dl{\`a}g function. Then, for every subset $A$ of $[0,1]$, if $d=1$, then almost surely,
	$$
	\underline{\dim}_M (B+f)(A) \geq \frac{2 \underline{\dim}_M A}{\underline{\dim}_M A +1},\quad \text{and} \quad \overline{\dim}_M (B+f)(A) \geq \frac{2 \overline{\dim}_M A}{\overline{\dim}_M A +1}.
	$$
The lower bounds can be achieved. If $d\geq 2$, then the right hand side in these inequalities is replaced by $2\lowdim A$ and $2\updim A$ respectively.
\end{corollary}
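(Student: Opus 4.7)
By Theorem~\ref{thm:minkba}, $\lowdim(B+f)(A)\ge\lowdim B(A)$ and $\updim(B+f)(A)\ge\updim B(A)$ almost surely, so the drift plays no role and my plan is to prove $\lowdim B(A)\ge\alpha(\lowdim A)$ and $\updim B(A)\ge\alpha(\updim A)$ a.s., where $\alpha(s)=2s/(s+1)$ when $d=1$ and $\alpha(s)=2s$ when $d\ge 2$. Write $s$ for $\lowdim A$ (resp.\ $\updim A$) and fix $\tau<s$. By definition of the lower (resp.\ upper) Minkowski dimension, for every sufficiently small $\epsilon$ (resp.\ along some subsequence $\epsilon_n\downarrow 0$) I can extract $t_1,\ldots,t_N\in A$ with $|t_i-t_j|\ge\epsilon$ for $i\ne j$ and $N=N_\epsilon\ge\epsilon^{-\tau}$. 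I will choose a spatial scale $\delta_\epsilon$ so that $\gtrsim N$ of the images $B(t_1),\ldots,B(t_N)$ land in distinct cubes of side $\delta_\epsilon$, directly giving a packing lower bound for $B(A)$.

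Set $\delta_\epsilon:=\epsilon^{(\tau+1)/2}$ if $d=1$ and $\delta_\epsilon:=\sqrt\epsilon$ if $d\ge 2$, partition $\R^d$ into cubes $\{Q_k\}$ of side $\delta_\epsilon$, and let $X_k:=\#\{i:B(t_i)\in Q_k\}$. The Cauchy--Schwarz inequality gives
\[
\#\{k:X_k\ge 1\}\;\ge\;\frac{N^2}{\sum_k X_k^2},
\]
and the Gaussian tail bound $\pr{|B(t_i)-B(t_j)|\le r}\lesssim (r/\sqrt{|t_i-t_j|})^d$ yields
\[
\E{\sum_k X_k^2}\;\le\; N\;+\;C\,\delta_\epsilon^d\sum_{i\ne j}|t_i-t_j|^{-d/2}.
\]
The $\epsilon$-separation forces $\sum_{j\ne i}|t_i-t_j|^{-d/2}\le 2\epsilon^{-d/2}\sum_{k=1}^{N}k^{-d/2}$, which is of order $\epsilon^{-d/2}\sqrt N$ when $d=1$, $\epsilon^{-1}\log N$ when $d=2$, and $\epsilon^{-d/2}$ when $d\ge 3$. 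The choice of $\delta_\epsilon$ is calibrated precisely so that the collision term $\delta_\epsilon^d\sum_{i\ne j}|t_i-t_j|^{-d/2}$ is of order $N$ (up to a $\log$ factor when $d=2$), giving $\E{\sum_k X_k^2}\lesssim N$; this balance is exactly what pins down the exponent $(\tau+1)/2$ in one dimension.

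Specialising to the dyadic sequence $\epsilon_n=2^{-n}$, Markov's inequality gives $\pr{\sum_k X_k^2>n^2 N_{\epsilon_n}}\lesssim n^{-2}$, which is summable; Borel--Cantelli then implies a.s.\ that $\#\{k:X_k\ge 1\}\ge N_{\epsilon_n}/n^2$ eventually. A standard sub-lattice argument (retain only cubes whose integer indices are all even, losing a factor $2^d$) converts these occupied cubes into centres of disjoint balls of radius $\delta_{\epsilon_n}/2$, whence $P(B(A),\delta_{\epsilon_n}/2)\gtrsim N_{\epsilon_n}/n^2$. Computing,
\[
\frac{\log P(B(A),\delta_{\epsilon_n}/2)}{\log(1/\delta_{\epsilon_n})}\;\longrightarrow\;\frac{2\tau}{\tau+1}\quad(d=1),\qquad\longrightarrow\;2\tau\quad(d\ge 2),
\]
and monotonicity of $P(B(A),\cdot)$ in the scale interpolates to arbitrary $\delta$. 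Letting $\tau\uparrow s$ and reinserting $f$ via Theorem~\ref{thm:minkba} yields the stated inequalities.

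For achievability in $d\ge 2$, any Cantor-like $A$ with $\dim_H A=\lowdim A=\updim A=s$ works: McKean's theorem gives $\lowdim B(A)\ge\dim_H B(A)=2s$, while covering $A$ by $\lesssim\epsilon^{-s}$ intervals of length $\epsilon$ and using the Brownian modulus of continuity yields $\updim B(A)\le 2s$. For $d=1$, take $A=\bigcup_n E_n$ in which each $E_n$ is an $\epsilon_n$-net of cardinality $\epsilon_n^{-s}$ sitting inside a subinterval of $[0,1]$ of length $\epsilon_n^{1-s}$, with $\epsilon_n\downarrow 0$ chosen fast enough that the Minkowski dimension of $A$ equals $s$; since $B(E_n)$ lies a.s.\ in an interval of length $\lesssim\sqrt{\epsilon_n^{1-s}}$ containing $\epsilon_n^{-s}$ points, one gets $\updim B(A)\le 2s/(s+1)$. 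The hardest step of the argument is the second-moment estimate above: one must identify the critical scale $\delta_\epsilon$ at which the expected image-collisions among the $B(t_i)$'s are of the same order as $N_\epsilon$ itself, and this scale in turn forces the exponent $2s/(s+1)$ in one dimension.
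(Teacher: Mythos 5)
Your proof of the two displayed inequalities is correct in substance and is, at its core, the same argument as the paper's: reduce to $f=0$ via Theorem~\ref{thm:minkba}, take a separated set in $A$ at time scale $\epsilon$, work at the calibrated spatial scale $\epsilon^{(\tau+1)/2}$ (the paper's choice $\delta_k=2^{-2k/(\alpha+1)}$ paired with spatial radius $2^{-k}$ is exactly this calibration), bound the expected number of collisions $\pr{|B(t_i)-B(t_j)|<\delta}\lesssim \delta/\sqrt{|t_i-t_j|}$ using the separation $|t_i-t_j|\ge |i-j|\epsilon$, and finish with Markov and Borel--Cantelli along dyadic scales. The packaging differs: the paper extracts a packing of $B(A)$ greedily from ``good'' points (those with $N_i<k^2\E{N_i}$), while you count occupied cubes via Cauchy--Schwarz; both are valid. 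Two remarks. First, the paper handles $d\ge 2$ by citing Perkins--Taylor, whereas your argument treats all $d$ uniformly (the $d=2$ logarithm is harmless), which is a nice simplification. Second, a small but necessary fix: you should trim the separated family to exactly $N_\epsilon=\lceil \epsilon^{-\tau}\rceil$ points. With the full packing of $A$ (which can have up to order $\epsilon^{-1}$ points) the collision term $\delta_\epsilon^d\sum_{i\ne j}|t_i-t_j|^{-d/2}$ need not be $O(N_\epsilon)$, so the inequality $\E{\sum_k X_k^2}\lesssim N_{\epsilon_n}$ behind your Markov/Borel--Cantelli step is not automatic; after trimming, the calibration $\delta_\epsilon^{}\,\epsilon^{-1/2}N_\epsilon^{3/2}\asymp N_\epsilon$ (for $d=1$) holds and everything goes through, for the liminf at all scales and for the limsup along the subsequence realizing $\updim A$.

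The genuine gap is in the sharpness claim for $d=1$. For $A=\bigcup_n E_n$ as you describe, each single block does satisfy a covering bound of order $\delta^{-2s/(s+1)}$ at every scale $\delta$, but the assertion $\updim B(A)\le 2s/(s+1)$ requires controlling the \emph{sum} over all (infinitely many) blocks at a fixed scale $\delta$, and this does not follow from the listed properties (cardinality, spacing, block length, fast decay of $\epsilon_n$) alone: if the subintervals carrying the $E_n$ are spread out in $[0,1]$, the images $B(E_n)$ are spread across the unit-order range of $B$, each requiring its own ball, and the covering number at scale $\delta$ can be of order $\delta^{-1}$ (moreover $\updim A$ need not equal $s$ then). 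What makes the construction work is the \emph{placement}: the blocks must accumulate at a point (say $0$), so that at scale $\delta$ all but a controlled number of the fine blocks are absorbed into a single ball around $B(0)$; ``$\epsilon_n\downarrow 0$ fast enough'' by itself does not provide this. The paper's proof builds exactly this feature in from the start: it takes $A_\beta=\{n^{-\beta}:n\in\N\}\cup\{0\}$, which has $\dim_M A_\beta=(1+\beta)^{-1}$, and applies Claim~\ref{cl:holder} (a clustering-at-$0$ covering argument using $\gamma$-H\"older continuity of $B$ for $\gamma<1/2$) to get $\updim B(A_\beta)\le 1/(1+\gamma\beta)$, then lets $\gamma\uparrow 1/2$; this is both shorter and complete. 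Your $d\ge 2$ achievability remark is fine but tangential to what the corollary asserts.
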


\begin{remark}\rm{
Inequalities analogous to Corollary~\ref{thm:dimimagelinearBM} for packing dimension of images $X(A)$ where $X$ is a multi-parameter fractional Brownian motion were established by Talagrand and Xiao in~\cite{TalagrandXiao}.
}
\end{remark}

We prove Theorem~\ref{thm:minkba} and Corollary~\ref{thm:dimimagelinearBM} in Section~\ref{sec:image}.
\newline
We now state our results concerning the Minkowski dimension of the graph of $B+f$. We prove them in Section~\ref{sec:graph}.

\begin{theorem}\label{thm:dimgraphBMbrownian}
	Let $(B_t)$ be a standard Brownian motion in $d$ dimensions and let $f:[0,1] \to \R^d$ be a c{\`a}dl{\`a}g function. Then, for every subset $A$ of $[0,1]$, we have, almost surely,
	\begin{align*}
		\updim G_A(B+f) & \geq \max\{\updim G_A(B), \updim G_A(f)\} \\
		\lowdim G_A(B+f) & \geq \max\{\lowdim G_A(B), \lowdim G_A(f)\}.
	\end{align*}
\end{theorem}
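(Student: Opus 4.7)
The plan is to parallel the proof of Theorem~\ref{thm:minkba} (the image-case bound) with the sausage characterization of Minkowski dimension established in Section~\ref{sec:0-1law}. I describe only the upper-dimension case; the lower-dimension case is entirely analogous.

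By Theorem~\ref{thm:constancydimgraph}, $\updim G_A(B+f) = c_4$ almost surely, and the sausage identity gives
\[
c_4 = (d+1) - \liminf_{\epsilon \to 0} \frac{\log \vol\bigl(W_\epsilon(G_A(B+f))\bigr)}{\log \epsilon} \quad \text{a.s.},
\]
where $W_\epsilon(S)$ denotes the Euclidean $\epsilon$-neighborhood of $S \subseteq \R^{d+1}$. It therefore suffices to show, each with positive probability, that $c_4 \geq \updim G_A(f)$ and $c_4 \geq \updim G_A(B)$.

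The core estimate is the expected-sausage lower bound
\[
\E{\vol\bigl(W_\epsilon(G_A(B+f))\bigr)} \geq c\,\vol\bigl(W_{c\epsilon}(G_A(f))\bigr),
\]
together with its analogue in which $G_A(f)$ is replaced by $G_A(B)$ and both sides are expectations. Using Fubini,
\[
\E{\vol\bigl(W_\epsilon(G_A(B+f))\bigr)} = \int \pr{\exists t \in A:\ (s-t)^2 + |y - B(t) - f(t)|^2 \leq \epsilon^2}\,ds\,dy.
\]
For $(s,y) \in W_{c\epsilon}(G_A(f))$, I choose a witness $t^* = t^*(s,y) \in A$ with $(s-t^*)^2 + |y-f(t^*)|^2 \leq c^2 \epsilon^2$ and bound the inner probability from below by the probability that $t = t^*$ alone satisfies the constraint, which reduces to a Gaussian estimate on $B(t^*)$.

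To pass from the expectation bound to an almost-sure dimension bound, I combine it with a second-moment estimate on $\vol(W_\epsilon(G_A(B+f)))$ and the Paley--Zygmund inequality. Together with the deterministic upper bound $\vol(W_\epsilon) \leq M$ (since $G_A(B+f)$ lies in a bounded box of $\R^{d+1}$), this yields a uniform positive probability that $\vol(W_\epsilon(G_A(B+f))) \geq c'\,\vol(W_{c\epsilon}(G_A(f)))$. The reverse Fatou lemma, applied along a sequence $\epsilon_n \to 0$ realizing $\updim G_A(f)$, together with the a.s.\ constancy of $c_4$, then gives $c_4 \geq \updim G_A(f)$ almost surely. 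The companion bound $c_4 \geq \updim G_A(B)$ is obtained in the same way.

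The main obstacle is the expected-sausage estimate itself: a direct Gaussian bound at a single witness $t^*$ loses a factor of $\epsilon^d$, which would weaken the final dimension bound by $d$. Overcoming this will likely require exploiting that many $t \in A$ simultaneously contribute to the ``exists'' event---the range of $B(\cdot) + f(\cdot)$ over a time window of length $\epsilon$ has typical diameter $\sqrt{\epsilon} \gg \epsilon$---or, equivalently, a change of variables that absorbs the Brownian shift $B(s)$ into the $y$-integration using translation invariance of Lebesgue measure.
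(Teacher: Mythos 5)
There is a genuine gap, and it sits exactly where you flag the ``main obstacle'': the expected-sausage lower bound $\E{\vol\left(W_\epsilon(G_A(B+f))\right)} \geq c\,\vol\left(W_{c\epsilon}(G_A(f))\right)$ is never established, and the single-witness argument you propose provably falls short. Fixing $t^*=t^*(s,y)$ and bounding the event $\{\exists t\in A:\dots\}$ below by the event at $t^*$ gives only $\pr{|B(t^*)+f(t^*)-y|\leq \epsilon}$, which is of order $\epsilon^d$ for $t^*$ bounded away from $0$; after integrating over $(s,y)$ this loses a factor $\epsilon^d$, i.e.\ exactly $d$ in the dimension exponent, so the resulting bound is vacuous. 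Neither of your two suggested repairs is carried out, and the second one (absorbing $B$ by translation invariance of the $y$-integration) cannot work as stated, because the shift $B(t)$ varies with the quantifier variable $t$ inside the event, so no single change of variables in $(s,y)$ removes it. The quantitative content of ``many $t$ contribute'' is precisely what the paper supplies, and it does so by abandoning the volume comparison for this half of the theorem: from a $4\epsilon$-packing of $G_A(f)$ with centers $(t_i,f(t_i))$ one asks how many centers $(t_i,(B+f)(t_i))$ collide after adding $B$, and Lemma~\ref{lem:technicalestimate3} shows $\max_i\E{N_i}\leq c\log(1/\epsilon)^{d+1}$ by splitting the other centers into dyadic annuli $|f(t_i)-f(t_j)|\in[2^k\epsilon,2^{k+1}\epsilon)$ and into near/far time scales (Gaussian density bound versus Gaussian tail bound), after which the good-points, Markov and Borel--Cantelli scheme of Theorem~\ref{thm:minkba} extracts a $2^{-k}$-packing of $G_A(B+f)$ of size $P_k(f)/(2(1+ck^2\log(2^k)^{d+1}))$. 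Only a polylogarithmic loss is achieved, not a constant-factor volume inequality, and nothing in your proposal substitutes for this collision estimate.

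The companion bound against $G_A(B)$ is not ``obtained in the same way'' either: this is the half where the paper genuinely uses expected sausage volumes, and the key input is the rearrangement inequality of Peres and Sousi (Theorem~\ref{thm:wienersausage}), applied after slicing the graph into time slabs $I_k=[(k-1)2^{-n},k2^{-n}]$ so that the $(d+1)$-dimensional covering number is comparable to $2^{nd}\sum_k V_{B+f}(I_k\cap A,2^{-n})$, and then taking $D_s=\B(f(s),2^{-n})$ on each slab to get $\E{V_{B+f}(I_k\cap A,2^{-n})}\geq\E{V_B(I_k\cap A,2^{-n})}$. A witness-point Gaussian computation does not produce this inequality, and your proposal does not identify any replacement for it. On the positive side, your machinery for upgrading expectation bounds to almost-sure statements (a second-moment bound as in \eqref{eq:secondmoment}, Paley--Zygmund, reverse Fatou along a subsequence realizing the dimension, and the a.s.\ constancy from Theorem~\ref{thm:constancydimgraph}) is a legitimate, if less quantitative, surrogate for the concentration Lemma~\ref{lem:volconc}; but it cannot rescue the argument without the two missing core estimates above.
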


In one dimension, when the drift function $f$ is continuous and $A=[0,1]$, equality is achieved in the inequalities of Theorem~\ref{thm:dimgraphBMbrownian}.

\begin{theorem}\label{thm:graphlinearBM}
	Let $(B_t)$ be a standard Brownian motion in one dimension and $f:[0,1] \to \R$ a continuous function. Then, almost surely,
	\begin{align*}
		\underline{\dim}_M G_{[0,1]} (B+f) & = \max\{\lowdim G_{[0,1]}(B), \underline{\dim}_M G_{[0,1]} (f)\},\\
		\overline{\dim}_M G_{[0,1]} (B+f) & = \max\{\lowdim G_{[0,1]}(B), \overline{\dim}_M G_{[0,1]} (f)\}.
	\end{align*}
\end{theorem}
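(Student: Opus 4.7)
The lower bounds in both identities are immediate from Theorem~\ref{thm:dimgraphBMbrownian}, since for one-dimensional Brownian motion the box dimension of $G_{[0,1]}(B)$ exists and equals $3/2$ almost surely (a classical fact, provable by combining the standard $\sqrt{\epsilon}$ scaling of oscillations with independence across disjoint intervals). Hence the whole task is to establish the matching \emph{upper} bounds on $\updim G_{[0,1]}(B+f)$ and $\lowdim G_{[0,1]}(B+f)$.

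The plan is to exploit a simple sub-additivity of box counts under pointwise sums. Let $N_\epsilon(G)$ denote the minimal number of axis-aligned $\epsilon$-boxes covering a set $G\subseteq\R^2$. Partition $[0,1]$ into intervals $I_k^\epsilon=[k\epsilon,(k+1)\epsilon]$ and recall the standard bound, valid for any continuous $h:[0,1]\to\R$,
$$
N_\epsilon(G_{[0,1]}(h))\;\asymp\;\sum_{k}\Bigl(1+\frac{\mathrm{osc}(h,I_k^\epsilon)}{\epsilon}\Bigr),
$$
where $\mathrm{osc}(h,I)=\sup_{s,t\in I}|h(s)-h(t)|$. Since $\mathrm{osc}(B+f,I_k^\epsilon)\le \mathrm{osc}(B,I_k^\epsilon)+\mathrm{osc}(f,I_k^\epsilon)$, summing yields
$$
N_\epsilon(G_{[0,1]}(B+f))\;\le\;C\bigl(N_\epsilon(G_{[0,1]}(B))+N_\epsilon(G_{[0,1]}(f))\bigr),
$$
where the additive $\lceil 1/\epsilon\rceil$ term is absorbed using the trivial lower bound $N_\epsilon(G_{[0,1]}(h))\ge 1/(2\epsilon)$ for any continuous $h$ (obtained by projecting the cover to the $t$-axis).

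Taking logarithms, dividing by $\log(1/\epsilon)$, and using $\log(a+b)\le \log 2+\max(\log a,\log b)$, we obtain
$$
\frac{\log N_\epsilon(G_{[0,1]}(B+f))}{\log(1/\epsilon)}\;\le\;\frac{O(1)}{\log(1/\epsilon)}+\max\Bigl\{X_\epsilon,\,Y_\epsilon\Bigr\},
$$
where $X_\epsilon=\log N_\epsilon(G_{[0,1]}(B))/\log(1/\epsilon)$ and $Y_\epsilon=\log N_\epsilon(G_{[0,1]}(f))/\log(1/\epsilon)$. Because $G_{[0,1]}(B)$ has a true box dimension, $X_\epsilon\to 3/2$ almost surely, and this is the crucial input: convergence of $X_\epsilon$ allows us to commute both $\limsup$ and $\liminf$ with the $\max$. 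Taking $\limsup$ gives $\updim G_{[0,1]}(B+f)\le \max\{3/2,\updim G_{[0,1]}(f)\}$, while the identity $\liminf_\epsilon \max(X_\epsilon,Y_\epsilon)=\max(\lim X_\epsilon,\liminf Y_\epsilon)$ (valid since $X_\epsilon$ converges) gives $\lowdim G_{[0,1]}(B+f)\le \max\{3/2,\lowdim G_{[0,1]}(f)\}$. Combining with the lower bound from Theorem~\ref{thm:dimgraphBMbrownian} completes the proof.

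The only real technical obstacle is the a.s.\ convergence $X_\epsilon\to 3/2$ (not merely $\updim G_{[0,1]}(B)=3/2$ or $\lowdim G_{[0,1]}(B)=3/2$ in isolation). This convergence is standard, but must be verified rather than merely cited: the sum $\sum_k\mathrm{osc}(B,I_k^\epsilon)$ is of order $1/\sqrt{\epsilon}$ with enough concentration (via independence of increments) to ensure $N_\epsilon(G_{[0,1]}(B))\asymp\epsilon^{-3/2}$ almost surely, simultaneously along all $\epsilon\to0$. Once this is in hand, the swap of $\liminf$ and $\max$ is legitimate and the argument closes.
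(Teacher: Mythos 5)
Your proposal is correct and follows essentially the paper's route: your oscillation-based subadditivity of box counts and the interchange of $\limsup$/$\liminf$ with the maximum (justified by the existence of the box dimension of the Brownian graph) is precisely the content of Proposition~\ref{pro:minkdimsumfg}, proved there via the quantities $\Omega_{k,n}$ and a subsequence argument, while your lower bound is exactly the appeal to Theorem~\ref{thm:dimgraphBMbrownian}. Like the paper, you rely on the classical fact that $\dim_M G_{[0,1]}(B)$ exists and equals $3/2$ almost surely, so there is no gap beyond what the paper itself takes as known.
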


\begin{remark}
\rm{
The equalities in Theorem~\ref{thm:graphlinearBM} can fail if $f$ is not continuous.
In Section~\ref{sec:example} we describe a c{\`a}dl{\`a}g function $f$ such that $\updim G_{[0,1]}(f) = 5/3$ and $\updim G_{[0,1]}(B+f) \geq 7/4$ a.s.
}
\end{remark}

\begin{remark}\rm{
We note that all the results stated above readily extend to packing dimension due to its representation in terms of upper Minkowski dimension, see~\cite[Proposition~3.8]{Falconer}.
}
\end{remark}

{\bf Related results.} Fractal properties of images $X(A)$, where $X$ is a L{\'e}vy process or a multi-parameter fractional Brownian motion were investigated in~\cite{KhoshSchilXiao, KhoshXiao, PerkinsTaylor, TalagrandXiao, Xiao}. Here we restrict attention to Brownian motion; the new feature is the effect of the drift function $f$.

\section{A 0-1 law}\label{sec:0-1law}

In this section we prove Theorems~\ref{thm:constancydimimage} and~\ref{thm:constancydimgraph} by first stating and proving a more general result for any c{\`a}dl{\`a}g adapted process with stationary and independent increments. At the end of the section we give a second proof of Theorem~\ref{thm:constancydimgraph} using L{\'e}vy's construction of Brownian motion.

We introduce some notation that will be used throughout the paper. If $g:\R_+\to \R^d$ is a measurable function and $A$ a subset of $[0,1]$, then for any $r>0$ we define
\begin{align}\label{eq:defvolume}
V_g(A,r) = \vol\left( \cup_{s\in A} \B(g(s),r) \right),
\end{align}
where $\B(x,r)$ stands for the ball centered at $x$ of radius $r$.

We are now ready to state the main result of this section.

\begin{proposition}\label{pro:general}
Let $(\F_t)$ be a right continuous filtration and $(X_t)$ a c{\`a}dl{\`a}g adapted process taking values in $\R^d$, $d\geq 1$, with stationary and independent increments.
Let $f:[0,1] \to \R^d$ be a c{\`a}dl{\`a}g function and $A$ a subset of $[0,1]$. Then almost surely we have
\begin{align*}
&\updim (X+f)(A) = \limsup_{\epsilon \to 0} \frac{\log \E{V_{X+f}(A,\epsilon)}}{\log \frac{1}{\epsilon}}
\\
& \lowdim (X+f)(A)= \liminf_{\epsilon \to 0} \frac{\log \E{V_{X+f}(A,\epsilon)}}{\log \frac{1}{\epsilon}}.
\end{align*}
\end{proposition}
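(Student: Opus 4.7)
I would split the proof into two steps: a deterministic sausage--packing comparison valid for any function, and a concentration step that replaces the pathwise sausage volume by its expectation.

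\emph{Pathwise comparison.} For any bounded $B\subset\R^d$ and $\epsilon>0$, elementary geometric arguments give
\[
c_d\,\epsilon^d\,P(B,\epsilon)\;\le\;\vol(B_\epsilon)\;\le\;C_d\,\epsilon^d\,P(B,\epsilon/2),
\]
where $B_\epsilon=\cup_{x\in B}\B(x,\epsilon)$: the lower bound packs $P(B,\epsilon)$ disjoint $\epsilon$-balls inside $B_\epsilon$, and the upper bound uses that a maximal $(\epsilon/2)$-separated set in $B$ is an $\epsilon$-net, so $B_\epsilon$ is contained in the union of $P(B,\epsilon/2)$ balls of radius $2\epsilon$. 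Taking $B=(X+f)(A)$, passing to logarithms and dividing by $\log(1/\epsilon)$, one obtains pathwise identities relating $\updim(X+f)(A)$ and $\lowdim(X+f)(A)$ to the corresponding $\limsup$ and $\liminf$ of $\log V_{X+f}(A,\epsilon)/\log(1/\epsilon)$ (up to an additive constant depending only on $d$). The proposition therefore reduces to showing that, almost surely, the random quantity $\log V_{X+f}(A,\epsilon)$ can be replaced by $\log \E{V_{X+f}(A,\epsilon)}$ inside both the $\limsup$ and the $\liminf$ divided by $\log(1/\epsilon)$.

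\emph{Easy direction.} For the inequality ``a.s.\ path $\le$ expectation'', work along the dyadic scale $\epsilon_n=2^{-n}$. Markov's inequality gives $\pr{V_{X+f}(A,\epsilon_n)>n^2\E{V_{X+f}(A,\epsilon_n)}}\le n^{-2}$, so the Borel--Cantelli lemma yields $\log V_{X+f}(A,\epsilon_n)\le 2\log n+\log\E{V_{X+f}(A,\epsilon_n)}$ for all large $n$ almost surely. Since both $\epsilon\mapsto V_{X+f}(A,\epsilon)$ and $\epsilon\mapsto\E{V_{X+f}(A,\epsilon)}$ are non-decreasing, one interpolates between consecutive dyadic scales, divides by $\log(1/\epsilon)$ and lets $\epsilon\to 0$ to conclude both the desired $\limsup$ and $\liminf$ upper bounds.

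\emph{Main obstacle: reverse direction.} The harder inequality ``a.s.\ path $\ge$ expectation'' requires preventing $V_{X+f}(A,\epsilon)$ from being substantially smaller than its mean along a dyadic sequence. Here I would exploit the stationary and independent increments of $X$: partition $[0,1]$ into $N$ subintervals $I_k=[t_{k-1},t_k]$ and observe that, by translation invariance of Lebesgue measure, the contribution $V_k:=V_{X+f}(A\cap I_k,\epsilon)$ equals the sausage volume of the shifted path $u\mapsto(X_{t_{k-1}+u}-X_{t_{k-1}})+f(t_{k-1}+u)$, a quantity depending only on the increments of $X$ over $I_k$ and on the restriction of $f$ to $I_k$; hence the $V_k$ are independent. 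Combined with the bracketing $\max_k V_k\le V_{X+f}(A,\epsilon)\le\sum_k V_k$, a Paley--Zygmund estimate on $\sum_k V_k$ (whose variance splits as $\sum_k \var(V_k)$ by independence) should yield $V_{X+f}(A,\epsilon)\ge c\,\E{V_{X+f}(A,\epsilon)}$ with uniform positive probability at each scale, and a Borel--Cantelli argument along $\epsilon_n$ then promotes this to an almost-sure logarithmic comparison. The technical crux is tuning $N=N(\epsilon)$ so that the ratio $\var(V_k)/(\E V_k)^2$ stays bounded uniformly in $\epsilon$, which is where the c\`adl\`ag regularity of $f$ and the increment distribution of $X$ enter.
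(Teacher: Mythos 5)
Your overall architecture (packing--sausage comparison, Markov plus Borel--Cantelli for the ``path $\le$ expectation'' direction, and a decomposition into independent pieces using stationary independent increments for the reverse direction) is the same as the paper's, and the first two steps are fine. But the reverse direction, which is the heart of the matter, has two genuine gaps. First, your concentration is aimed at the wrong functional: since $V_{X+f}(A,\epsilon)\le\sum_k V_k$, a Paley--Zygmund lower bound on $\sum_k V_k$ gives no lower bound on $V_{X+f}(A,\epsilon)$; the only usable half of your bracketing is $V_{X+f}(A,\epsilon)\ge\max_k V_k$, so what you must show is that \emph{at least one} piece is large. This is what the paper does: it proves that for every time set $F$ and every $\delta>0$ one has $\E{V_{X+f}(F,\delta)^2}\le 2\left(\E{V_{X+f}(F,\delta)}\right)^2$, hence $\pr{V_{X+f}(F,\delta)\ge\tfrac12\E{V_{X+f}(F,\delta)}}\ge 1/8$; it then splits $A$ into $k$ pieces chosen via the continuity of $t\mapsto\E{V_{X+f}(A\cap[0,t],2^{-k})}$ so that each piece carries exactly a $1/k$ fraction of the expected volume (equal-length intervals will not do), and uses independence of the pieces to get failure probability at most $(7/8)^k$, which is summable; the resulting loss of a factor $\tfrac{1}{2k}$ is harmless after taking logarithms. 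Relatedly, your Borel--Cantelli step as stated does not work: a success probability merely bounded below by a positive constant at each dyadic scale is not summable-failure, so it cannot be ``promoted'' to an almost sure statement.

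Second, the key quantitative ingredient --- the uniform second-moment (equivalently variance) bound --- is exactly what you defer to ``tuning $N(\epsilon)$'', and no choice of $N$ can produce it: each piece is again a sausage volume of the same general type, so you need an a priori bound on $\E{V^2}/(\E{V})^2$ valid for arbitrary time sets and radii. Proving it is the real work in the paper: one writes $\E{V_{X+f}(F,\delta)^2}=2\int\int\pr{\tau_x\le\tau_y<\infty}\,dx\,dy$ where $\tau_x$ is the first time in $F$ at which $X+f$ enters $\B(x,\delta)$, checks that $\tau_x$ is a stopping time (this is where the c\`adl\`ag property of $X$ and $f$ and the right continuity of the filtration are actually used), and applies the strong Markov property of $X$ at $\tau_x$ together with translation invariance of volume to bound the conditional expected remaining sausage volume by $\E{V_{X+f}(F,\delta)}$. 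Your sketch correctly identifies the independence of the pieces (via the increment/translation argument) but supplies neither this stopping-time/strong Markov estimate nor any substitute for it, so the lower bound remains unproved as written.
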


Before proving it we explain how Theorems~\ref{thm:constancydimimage} and~\ref{thm:constancydimgraph} follow.

\begin{proof}[{\bf Proof of Theorem~\ref{thm:constancydimimage}}]
Setting $X_t = B_t$, since Brownian motion satisfies the assumptions of Proposition~\ref{pro:general}, the theorem follows.
\end{proof}

\begin{proof}[{\bf Proof of Theorem~\ref{thm:constancydimgraph}}]

For $t\in \R_+$, let $X_t = (t,B_t)$ and $g(t) = (t,f(t))$. Then $X$ and $g$ clearly satisfy the assumptions of Proposition~\ref{pro:general}, and hence this finishes the proof.
\end{proof}

We now devote the rest of the section to the proof of Proposition~\ref{pro:general}. First we state a standard fact about Minkowski dimensions which can be found e.g.\ in \cite[Proposition~3.2]{Falconer}.

\begin{claim}\label{cl:minkvolgeneral}
Let $A$ be a bounded subset of $\R^d$. Then
\begin{align*}
&\updim A = \limsup_{\epsilon \to 0} \frac{\log \vol(A+\B(0,\epsilon))}{\log \frac{1}{\epsilon}} + d \\
& \lowdim A = \liminf_{\epsilon \to 0} \frac{\log \vol(A+\B(0,\epsilon))}{\log \frac{1}{\epsilon}} + d.
\end{align*}
\end{claim}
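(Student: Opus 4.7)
The plan is to sandwich the volume $\vol(A + \B(0,\epsilon))$ between constant multiples of $\epsilon^d P(A,\epsilon)$ and then take logarithms. Writing $\omega_d = \vol(\B(0,1))$, the lower bound comes essentially for free: any configuration $x_1,\dots,x_{P(A,\epsilon)}$ of centres witnessing the packing number gives $P(A,\epsilon)$ disjoint balls of radius $\epsilon$ all contained in $A + \B(0,\epsilon)$, so that
\[
\vol(A + \B(0,\epsilon)) \geq \omega_d \, \epsilon^d \, P(A,\epsilon).
\]

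For the matching upper bound I would take a \emph{maximal} such packing and observe that it is automatically a $2\epsilon$-net of $A$: any point of $A$ at distance greater than $2\epsilon$ from every $x_i$ would allow one to enlarge the packing, contradicting maximality. Consequently $A \subseteq \bigcup_i \B(x_i, 2\epsilon)$, hence $A + \B(0,\epsilon) \subseteq \bigcup_i \B(x_i, 3\epsilon)$, and therefore
\[
\vol(A + \B(0,\epsilon)) \leq 3^d \omega_d \, \epsilon^d \, P(A,\epsilon).
\]

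Taking logarithms and dividing by $\log(1/\epsilon)$, the multiplicative constants $\omega_d$ and $3^d\omega_d$ contribute only an $O(1/\log(1/\epsilon))$ error, while the factor $\epsilon^d$ contributes exactly $-d$. Taking $\limsup$ (respectively $\liminf$) as $\epsilon \to 0$ gives
\[
\limsup_{\epsilon \to 0} \frac{\log \vol(A + \B(0,\epsilon))}{\log(1/\epsilon)} = \updim A - d,
\]
and similarly for the lower Minkowski dimension; rearranging yields the two stated identities. There is no genuine obstacle here: this is a standard metric-space equivalence between Minkowski content and packing/covering functions, and the only point to watch is that one must choose the packing maximal so that it simultaneously serves as a covering. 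Alternatively, one can simply cite~\cite[Proposition~3.2]{Falconer}, as the authors indicate.
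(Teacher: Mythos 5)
Your proof is correct and follows essentially the same route as the paper: sandwich $\vol(A+\B(0,\epsilon))$ between constant multiples of $\epsilon^d P(A,\epsilon)$, using the disjoint packing balls for the lower bound and maximality of the packing (so it doubles as a covering) for the upper bound, then take logarithms. If anything, your enlargement to radius $3\epsilon$ is slightly more careful than the paper's stated $2\epsilon$, but the constant is immaterial for the dimension.
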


\begin{proof}[{\bf Proof}]
Let $P(\epsilon)$ be the maximum number of disjoint $\epsilon$-balls with centers $x_1,\ldots, x_{P(\epsilon)}$ in $A$. It is easy to see that
\[
\bigcup_{i=1}^{P(\epsilon)} \B(x_i,\epsilon) \subseteq A+ \B(0,\epsilon) \subseteq \bigcup_{i=1}^{P(\epsilon)} \B(x_i,2\epsilon).
\]
It then follows that
\[
 c(d) \epsilon^d P(\epsilon) \leq \vol(A+\B(0,\epsilon)) \leq  c(d) (2\epsilon)^d P(\epsilon).
\]
Taking logarithms of both sides, dividing by $\log \epsilon^{-1}$ and taking the limit as $\epsilon$ goes to $0$ completes the proof of the claim.
\end{proof}

The main ingredient of the proof of Proposition~\ref{pro:general} is the following lemma on the concentration of the volume of the sausage around its mean.

\begin{lemma}\label{lem:volconc}
Let $(\F_t)$ be a right continuous filtration and $(X_t)$ a c{\`a}dl{\`a}g adapted process taking values in $\R^d$, $d\geq 1$, with stationary and independent increments.
Let $f:[0,1] \to \R^d$ be a c{\`a}dl{\`a}g function and $A$ a subset of $[0,1]$. Then almost surely we have
\begin{align*}
&\limsup_{\epsilon \to 0} \frac{\log V_{X+f}(A,\epsilon)}{\log \frac{1}{\epsilon}} = \limsup_{\epsilon \to 0} \frac{\log \E{V_{X+f}(A,\epsilon)}}{\log \frac{1}{\epsilon}}
\\
& \liminf_{\epsilon \to 0} \frac{\log V_{X+f}(A,\epsilon)}{\log \frac{1}{\epsilon}} = \liminf_{\epsilon \to 0} \frac{\log \E{V_{X+f}(A,\epsilon)}}{\log \frac{1}{\epsilon}}.
\end{align*}
\end{lemma}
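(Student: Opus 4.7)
The plan is to prove the two identities along the dyadic sequence $\epsilon_n=2^{-n}$ via matching almost sure Markov (upper) and Chebyshev (lower) estimates, exploiting the stationary and independent increments of $X$. Since $\epsilon\mapsto V_{X+f}(A,\epsilon)$ and $\epsilon\mapsto \E{V_{X+f}(A,\epsilon)}$ are both non-decreasing, a standard monotonicity sandwich for $\epsilon\in(\epsilon_{n+1},\epsilon_n]$ combined with $\log(1/\epsilon)=n\log 2+O(1)$ shows that the limsup and liminf along $\epsilon\downarrow 0$ coincide with those along the dyadic sequence, so I may work only with $\epsilon_n$.

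The $\le$ halves follow from Markov. For each $\delta>0$,
\[
\pr{V_{X+f}(A,\epsilon_n)\ge \epsilon_n^{-\delta}\E{V_{X+f}(A,\epsilon_n)}}\le \epsilon_n^{\delta}=2^{-n\delta}
\]
is summable in $n$, so by Borel--Cantelli the complementary event holds for all but finitely many $n$ almost surely. Taking logarithms, dividing by $\log(1/\epsilon_n)$, passing to limsup and liminf, and letting $\delta\downarrow 0$ along a countable sequence, gives the $\le$ direction of both identities.

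The reverse $\ge$ halves are the main step and rely on a second moment estimate. I would partition $[0,1]$ into $k=k(\epsilon)$ equal sub-intervals $I_j=[(j-1)/k,j/k]$ with $k(\epsilon)\to\infty$ but $\log k(\epsilon)=o(\log(1/\epsilon))$ (a suitable polylogarithm in $1/\epsilon$), and set $V_j=V_{X+f}(A\cap I_j,\epsilon)$ and $\tilde V=\sum_{j=1}^{k}V_j$. Translation invariance of Lebesgue measure together with stationary and independent increments of $X$ imply that each $V_j$ is a functional of $(X_s-X_{(j-1)/k})_{s\in I_j}$ and $f|_{I_j}$, so $V_1,\dots,V_k$ are independent. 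The pathwise bounds
\[
V_{X+f}(A,\epsilon)\le \tilde V\le k\,V_{X+f}(A,\epsilon),
\]
combined with $\log k=o(\log(1/\epsilon))$, show that $V$ and $\tilde V$ have the same log-scaling limits, and likewise for their means, so it suffices to show $\tilde V_{\epsilon_n}\ge \epsilon_n^{\delta}\E{\tilde V_{\epsilon_n}}$ eventually a.s. Chebyshev's inequality plus independence then give
\[
\pr{\tilde V_\epsilon\le \epsilon^{\delta}\E{\tilde V_\epsilon}}\le \frac{\sum_{j=1}^{k}\var(V_j)}{(1-\epsilon^{\delta})^2\bigl(\sum_{j=1}^{k}\E{V_j}\bigr)^2},
\]
and one bounds the numerator via $V_j\le c_d(\mathrm{osc}(X+f,I_j)+\epsilon)^d$, truncated on the rare event where some oscillation is atypically large, so that the right-hand side is summable along $\epsilon_n$. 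The hard part will be making this variance bound uniform in $\epsilon$ for a general c{\`a}dl{\`a}g process with stationary independent increments, since short-interval oscillations (e.g.\ from large jumps) are not deterministically controlled; a careful choice of $k(\epsilon)$ balancing variance dilution against the constraint $\log k=o(\log(1/\epsilon))$ is the central technical point. Once the Chebyshev tail is summable, Borel--Cantelli and $\delta\downarrow 0$ complete the proof.
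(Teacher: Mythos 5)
Your reduction to dyadic scales and the Markov--Borel--Cantelli argument for the upper halves are fine and agree with the paper. The gap is in the lower halves, and it is not a routine technicality: the Chebyshev/variance route you sketch cannot be made to work at this level of generality, and you yourself flag the unproved step. First, cutting $[0,1]$ into $k$ equal-length intervals does not equidistribute the expected volume: if $A$ lies inside a single interval $I_{j_0}$, then $V_j=0$ for $j\neq j_0$, the independence across intervals buys nothing, and your bound reduces to requiring that $\var(V_{j_0})/(\E{V_{j_0}})^2$ be summably small, i.e.\ genuine multiplicative concentration of one sausage volume about its mean, which is false in general. Second, the only second-moment control available for a general c{\`a}dl{\`a}g process with stationary independent increments is $\E{(V_{X+f}(F,\delta))^2}\le 2\,(\E{V_{X+f}(F,\delta)})^2$, which gives $\var(V)\le (\E{V})^2$; Chebyshev then yields a tail bound of order one, not a summable one, and the allowed loss factor $\epsilon^{\delta}$ does not help because the deviation you must exclude is still of order $\E{\tilde V}$. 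Worse, for heavy-tailed L\'evy processes (permitted by the hypotheses) $\var(V_j)$ can be infinite, and the oscillation truncation you mention would have to be balanced against its effect on the mean, with no mechanism supplied.

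What is missing is the device the paper uses to sidestep concentration altogether: one only needs $V_{X+f}(A,2^{-k})\ge \frac{1}{2k}\E{V_{X+f}(A,2^{-k})}$ eventually a.s., a polylogarithmic loss invisible in the exponents. The paper proves the uniform estimate $\E{(V_{X+f}(F,\delta))^2}\le 2(\E{V_{X+f}(F,\delta)})^2$ for every $F$ and $\delta$ by writing the second moment in terms of hitting times $\tau_x$ and invoking the strong Markov property; by the second moment (Paley--Zygmund) method, any piece then exceeds half its mean with probability at least $1/8$. Crucially, the time axis is partitioned not into equal-length intervals but into $k$ intervals $I_j$ chosen, via continuity of $t\mapsto \E{V_{X+f}(A\cap[0,t],2^{-k})}$ and subadditivity, so that each carries at least a $1/k$ fraction of the total expected volume; independence of increments over the $I_j$ then makes the probability that no piece is good at most $(7/8)^k$, which is summable, and one good piece already gives the $\frac{1}{2k}$ lower bound. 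Your partition-and-independence instinct is the right one, but without the equal-expected-volume partition and the ``one good piece suffices'' argument replacing variance control, the proposed proof does not close.
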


\begin{claim}\label{cl:stoppingtime}
Let $(\F_t)$ be a right continuous filtration and $(X_t)$ a c{\`a}dl{\`a}g adapted process taking values in $\R^d$, $d\geq 1$. Let $D$ be an open set in $\R^d$ and $F$ a subset of $[0,1]$. Then
\[
\tau = \inf\{ t\in F: X_t \in D\}
\]
is a stopping time.
\end{claim}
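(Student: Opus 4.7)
The plan is to prove that $\{\tau<t\}\in\F_t$ for every $t>0$; right continuity of $(\F_t)$ will then give $\{\tau\le t\}=\bigcap_n\{\tau<t+1/n\}\in\F_t$, so $\tau$ is a stopping time. The basic challenge is that
\[
\{\tau<t\}=\bigcup_{s\in F\cap[0,t)}\{X_s\in D\}
\]
is an a priori uncountable union, which must be rewritten as a countable one using c\`adl\`ag-ness of $X$ and openness of $D$.

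The mechanism is that the random set $G:=\{u\in[0,1]:X_u\in D\}$ is ``right-open'': each $u\in G$ has $[u,u+\delta)\subseteq G$ for some $\delta>0$, so any point of $G$ is detected by nearby rational times. For each $q\in\Q\cap[0,1]$, on $\{X_q\in D\}$ I define
\[
L_q=\inf\{s\in[0,q]:[s,q]\subseteq G\},\qquad S_q=\{s\in[0,q]:[s,q]\subseteq G\},
\]
so that $S_q$ equals $[L_q,q]$ or $(L_q,q]$ according as $X_{L_q}\in D$ or not. A short right-continuity argument shows
\[
\{\tau<t\}=\bigcup_{q\in\Q\cap[0,t)}\bigl(\{X_q\in D\}\cap\{F\cap S_q\neq\emptyset\}\bigr):
\]
forward, any $s\in F\cap[0,t)$ with $X_s\in D$ satisfies $s\in S_q$ for any rational $q\in(s,t)\cap(s,s+\delta)$; backward, $s\in F\cap S_q$ lies in $G\cap[0,t)$, so $\tau\le s<t$.

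The remaining task is measurability of each event in this countable union. The $\F_q$-measurability of $L_q$ reduces via
\[
\{L_q\le c\}\cap\{X_q\in D\}=\{X_u\in D\ \forall u\in[c,q]\}
\]
to the elementary fact that for a c\`adl\`ag function $h$ one has $\inf_{u\in[c,q]}h(u)=\inf_{r\in(\Q\cap[c,q])\cup\{q\}}h(r)$ (the value at any irrational point is approached from the right by rationals); applied to $h(u)=d(X_u,D^c)$ it writes this event as $\bigcup_k\bigcap_r\{d(X_r,D^c)\ge 1/k\}\in\F_q$. Letting $M_q:=\sup(F\cap[0,q])$, a deterministic constant, a case analysis yields
\[
\{F\cap S_q\neq\emptyset\}=\begin{cases}\{L_q<M_q\}\cup\bigl(\{L_q=M_q\}\cap\{X_{L_q}\in D\}\bigr)&\text{if }M_q\in F,\\ \{L_q<M_q\}&\text{otherwise;}\end{cases}
\]
both alternatives are $\F_q$-measurable, with $\{X_{L_q}\in D\}\in\F_q$ following from progressive measurability of $X$ together with $L_q$ being $\F_q$-measurable and $[0,q]$-valued. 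The main subtlety, and the reason for working with $S_q$ rather than $[L_q,q]$ throughout, is the case of maximal intervals of $G$ open at their left endpoint ($X_{L_q}\in\partial D$): there $L_q\in F$ by itself does \emph{not} witness $\tau<t$, and the decomposition above is designed precisely to handle this point cleanly.
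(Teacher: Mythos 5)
Your reduction of $\{\tau<t\}$ to a countable union over rationals is correct, and the case analysis with $M_q$ is sound as set arithmetic; the gap is in the measurability step, which is the heart of the matter. The displayed identity $\{X_u\in D\ \forall u\in[c,q]\}=\bigcup_k\bigcap_r\{d(X_r,D^c)\ge 1/k\}$ is false: the right-hand side is $\{\inf_{u\in[c,q]}d(X_u,D^c)>0\}$, and a c\`adl\`ag path can stay in the open set $D$ on all of $[c,q]$ while $\inf_{[c,q]}d(X_u,D^c)=0$, because the infimum of a c\`adl\`ag function on a compact interval may be attained only as a left limit (take $d=1$, $D=(0,\infty)$, $X_u=t_0-u$ for $u<t_0$ and $X_u=1$ for $u\ge t_0$, with $t_0\in(c,q)$). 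Consequently the $\F_q$-measurability of $L_q$ is not established, and neither is that of $\{L_q<M_q\}$, which when unwound equals $\bigcup_{c'\in\Q\cap[0,M_q)}\{X_u\in D\ \forall u\in[c',q]\}$, so your whole decomposition of $\{F\cap S_q\neq\emptyset\}$ rests on exactly this event. This is not a cosmetic slip: ``$X$ stays in $D$ throughout $[c,q]$'' is the complement of ``$X$ hits the closed set $D^c$ at some time in $[c,q]$'', and for c\`adl\`ag processes that is precisely the event that countable operations on the rational skeleton cannot capture, since the skeleton cannot distinguish touching $D^c$ at an irrational time from merely approaching it through left limits; this is why hitting times of closed sets are classically handled only under completed filtrations via the d\'ebut theorem, whereas the claim here concerns a raw right-continuous filtration. (A smaller slip: $\{L_q\le c\}\cap\{X_q\in D\}$ equals $\{X_u\in D\ \forall u\in(c,q]\}$, not $[c,q]$; that one is fixable.)

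The difficulty disappears if the countable test set is chosen inside $F$ rather than among the rationals, which is the paper's route: taking a countable $F_\infty\subseteq F$ that is dense in $F$ from the right (adding the right-isolated points of $F$, which are countable), right-continuity of $X$ and openness of $D$ give $\{\tau<t\}=\bigcup_{q\in F_\infty,\,q<t}\{X_q\in D\}$, a countable union of single-time events in $\F_t$, and then $\{\tau\le t\}=\bigcap_n\{\tau<t+1/n\}\in\F_{t+}=\F_t$ exactly as in your last step. Your look-back construction might be repairable by replacing $S_q$ with $\{s\le q:\inf_{u\in[s,q]}d(X_u,D^c)>0\}$ (the union identity for $\{\tau<t\}$ still holds for this smaller set, by the same right-continuity argument, and its infimum is measurable by your inf-over-rationals fact), but then membership of the left endpoint is no longer governed by $\{X_{L_q}\in D\}$ and the boundary bookkeeping would have to be redone; as written, the proof has a genuine gap at its key measurability step.
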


\begin{proof}[{\bf Proof}]
Let $F_\infty$ be a countable dense subset of $F$. Then for all $t\in [0,1]$ we deduce
\[
\{\tau <t\} = \cup_{q \in F_\infty, q<t} \{X_q \in D\},
\]
since $X$ is c{\`a}dl{\`a}g and $D$ is an open set. Hence $\{\tau<t\} \in \F_t$. Writing
\[
\{\tau\leq t\} = \bigcap_{n} \{ \tau<t+1/n\},
\]
we get that $\{\tau\leq t\} \in \F_{t+} = \F_t$.
\end{proof}

\begin{proof}[{\bf Proof of Lemma~\ref{lem:volconc}}]
First notice that by the monotonicity of the volume we have
\begin{align*}
&\limsup_{\epsilon \to 0} \frac{\log V_{X+f}(A,\epsilon)}{\log \frac{1}{\epsilon}} = \limsup_{k\to \infty} \frac{\log V_{X+f}(A,2^{-k})}{\log 2^k}  \ \ \text{ and }
\\
& \liminf_{\epsilon \to 0} \frac{\log V_{X+f}(A,\epsilon)}{\log \frac{1}{\epsilon}} = \liminf_{k\to \infty} \frac{\log V_{X+f}(A,2^{-k})}{\log 2^k}.
\end{align*}
Hence it suffices to show that a.s.\ for all large enough $k$ we have
\[
\frac{1}{2k}\E{V_{X+f}(A,2^{-k})} \leq V_{X+f}(A,2^{-k}) \leq k^2 \E{V_{X+f}(A,2^{-k})}.
\]
The upper bound follows easily from Markov's inequality and the Borel Cantelli lemma. We now show the lower bound.
In fact, note that by Borel Cantelli, it suffices to show that for all $k$ we have
\begin{align}\label{eq:upperboundborel}
\pr{V_{X+f}(A,2^{-k}) \geq \frac{1}{2k} \E{V_{X+f}(A,2^{-k})}} \geq 1 - \left(\frac{7}{8}\right)^k.
\end{align}

We first show that for any measurable $F\subseteq \R_+$ and any $\delta>0$ we have
\begin{align}\label{eq:secondmoment}
\E{(V_{X+f}(F,\delta))^2} \leq 2 (\E{V_{X+f}(F,\delta)})^2.
\end{align}
For any $x\in \R^d$ we write
\[
\tau_x = \inf\{t \in F: X(t) + f(t) \in \B(x,\delta)\}
\]
with the convention that $\tau_x$ is infinite, if $X+f$ does not hit the ball $\B(x,\delta)$. We have
\begin{align}\label{eq:volumesquared}
\E{(V_{X+f}(F,\delta))^2}  & = \int_{\R^d} \int_{\R^d} \pr{\tau_x <\infty, \tau_y <\infty}\, dx \,dy =
2 \int_{\R^d}\int_{\R^d} \pr{\tau_x \leq \tau_y<\infty} \,dx \,dy \nonumber
\\
\nonumber
&= 2\int_{\R^d} \pr{\tau_x<\infty} \int_{\R^d}\prcond{\tau_x\leq \tau_y<\infty}{\tau_x <\infty}\, dy\,dx \\
&= 2\int_{\R^d} \pr{\tau_x<\infty} \econd{V_{X+f}(F\cap [\tau_x,\infty),\delta)}{\tau_x<\infty} \,dx.
\end{align}
Since $X$ and $f$ are c{\`a}dl{\`a}g and the filtration is right continuous, it follows from Claim~\ref{cl:stoppingtime} that $\tau_x$ is a stopping time.
By the stationarity, the independence of increments and the c{\`a}dl{\`a}g property of $X$, we get that $X$ satisfies the strong Markov property (see~\cite[Proposition~I.6]{Bertoin}).
Thus the conditional law of the process $\{X(\tau_x+s)-X(\tau_x)\}_{s\geq 0}$ given that   $\{\tau_x<\infty\}$,  is identical to the law of  $\{X(s)\}_{s\geq 0}$.
Let $X'$ be a process independent of $X$ but with the same law as $X$. The Markov property of $X'$ and its independence from $\tau_x$  implies that the conditional law of the process
$\{X'(\tau_x+s)-X'(\tau_x)\}_{s\geq 0}$ given that   $\{\tau_x<\infty\}$,  is also identical to the law of  $\{X(s)\}_{s\geq 0}$. Therefore given $\tau_x<\infty$, the two random paths
 $\{X(t)-X(\tau_x)\}_{t\geq \tau_x}$ and $\{X'(t)-X'(\tau_x)\}_{t\geq \tau_x}$ have the same law. Since volume is unaffected by translation,
\begin{align*}
\econd{V_{X+f}(F\cap [\tau_x,\infty),\delta)}{\tau_x<\infty} & = \econd{V_{X'+f}(F\cap [\tau_x,\infty),\delta)}{\tau_x<\infty} \\ &\leq \E{V_{X'+f}(F,\delta)}= \E{V_{X+f}(F,\delta)},
\end{align*}
and hence this together with~\eqref{eq:volumesquared} concludes the proof of~\eqref{eq:secondmoment}.

Therefore, from~\eqref{eq:secondmoment}, applying the second moment method to the random variable $V_{X+f}(F,\delta)$ we get that for any set $F$ and any $\delta>0$
\begin{align}\label{eq:momentmethod}
\pr{V_{X+f}(F,\delta) \geq \frac{1}{2} \E{V_{X+f}(F,\delta)}} \geq \frac{1}{8}.
\end{align}
We set $t_0=0$.
It is easy to see that $\E{V_{X+f}(A\cap [0,t],2^{-k})}$ is continuous as a function of $t$. Hence for $j=1,\ldots, k$ we can define
\begin{align*}
t_j = \inf\left\{ t\geq 0: \E{V_{X+f}(A\cap [0,t],2^{-k})} = \frac{j}{k}\E{V_{X+f}(A,2^{-k})}\right\}
\end{align*}
and we write $I_j = [t_{j-1},t_j]$. By the subadditivity property of the volume, we get that for all $j$
\[
\E{V_{X+f}(A\cap I_j,2^{-k})} \geq \E{V_{X+f}(A\cap [0,t_j],2^{-k})} -\E{V_{X+f}(A\cap [0,t_{j-1}],2^{-k})}
=\frac{1}{k}\E{V_{X+f}(A,2^{-k})}.
\]
Therefore we get
\begin{align*}
&\pr{V_{X+f}(A,2^{-k})  \geq \frac{1}{2k}\E{V_{X+f}(A,2^{-k})}} \geq \pr{\exists j: \ V_{X+f}(A\cap I_j, 2^{-k}) \geq \frac{1}{2} \E{V_{X+f}(A\cap I_j,2^{-k})}} \\
& = 1 - \prod_{j=1}^{k} \pr{V_{X+f}(A\cap I_j, 2^{-k}) < \frac{1}{2} \E{V_{X+f}(A\cap I_j,2^{-k})}} \geq 1 - \left(\frac{7}{8}\right)^k,
\end{align*}
where the equality follows by the independence of the increments of $X$ and the last inequality follows from~\eqref{eq:momentmethod}. This finishes the proof of~\eqref{eq:upperboundborel}, and hence concludes the proof of the lemma.
\end{proof}

\begin{remark}\rm{
We note that if $X$ and $f$ are c{\`a}dl{\`a}g and $A$ is a subset of $[0,1]$, then $V_{X+f}(A,\epsilon)$ is a random variable. Indeed, let $A_\infty$ be a countable dense subset of $A$, then $V_{X+f}(A,\epsilon) = V_{X+f}(A_\infty,\epsilon)$.
Now $V_{X+f}(A_\infty,\epsilon) = \lim_{n\to \infty} V_{X+f}(A_n,\epsilon)$, where $A_n$ are finite sets. By the continuity of the volume (see~\cite[Lemma~4.1]{PeresSousi}) $V_{X+f}(A_n,\epsilon)$ is a random variable for each $n$.
}
\end{remark}

\begin{proof}[{\bf Proof of Proposition~\ref{pro:general}}]
The statement of the proposition follows directly from Lemma~\ref{lem:volconc} and Claim~\ref{cl:minkvolgeneral}.
\end{proof}

\subsection{Another proof of Theorem~\ref{thm:constancydimgraph}}\label{sec:levy}

In this section we give an alternative proof of Theorem~\ref{thm:constancydimgraph} that relies on L{\'e}vy's construction of Brownian motion. The only properties of Minkowski dimension that are used in this proof are stability under finite unions and under adding linear functions.

\begin{proposition}
	Let $f:[0,1] \to \R^d$ be a bounded measurable function, and $\mu \in \R^d$. Define $g:[0,1] \to \R^d$ by
	$$
	g(t) = f(t) + \mu t.
	$$
	Then, for every subset $A$ of $[0,1]$, we have
	$$
	\underline{\dim}_M G_A(f) = \underline{\dim}_M G_A(g) \quad \text{and} \quad  \overline{\dim}_M G_A(f) = \overline{\dim}_M G_A(g).
	$$
\end{proposition}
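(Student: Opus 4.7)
The plan is to realise $G_A(g)$ as the image of $G_A(f)$ under a bi-Lipschitz self-map of $\R^{d+1}$, and then invoke the standard fact that both upper and lower Minkowski dimensions are invariant under bi-Lipschitz maps.

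First I would define $T:\R^{d+1}\to\R^{d+1}$ by $T(t,y) = (t,\,y+\mu t)$. This is a linear bijection with inverse $T^{-1}(t,y) = (t,\,y-\mu t)$, and a direct application of the triangle inequality shows that both $T$ and $T^{-1}$ are Lipschitz with constant at most $L := 1+|\mu|$. Moreover, by construction, $G_A(g) = T(G_A(f))$. Thus the proposition is equivalent to the statement that bi-Lipschitz images preserve $\updim$ and $\lowdim$.

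To verify this preservation, I would argue at the level of packing numbers. If $x_1,\ldots,x_k\in G_A(f)$ are centres of pairwise disjoint balls of radius $\epsilon$, then $\|x_i-x_j\|\geq 2\epsilon$, so by the upper Lipschitz bound for $T^{-1}$ we get $\|T(x_i)-T(x_j)\|\geq 2\epsilon/L$. Hence balls of radius $\epsilon/L$ centred at $T(x_1),\ldots,T(x_k)$ are pairwise disjoint, giving
\[
P\bigl(G_A(g),\epsilon/L\bigr) \;\geq\; P\bigl(G_A(f),\epsilon\bigr).
\]
Taking logarithms, dividing by $\log(1/\epsilon)$, and noting that $\log(L/\epsilon)/\log(1/\epsilon)\to 1$ as $\epsilon\to 0$, we deduce $\updim G_A(g)\geq \updim G_A(f)$ and $\lowdim G_A(g)\geq \lowdim G_A(f)$. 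The reverse inequalities follow by swapping the roles of $f$ and $g$ (equivalently, applying the same argument to $T^{-1}$).

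There is no real obstacle: the whole content is that an affine shear along the time axis is globally bi-Lipschitz, and the packing-number comparison above is essentially tautological. If one prefers, the same conclusion can be read off directly from Claim~\ref{cl:minkvolgeneral} once one notes that the $\epsilon$-neighbourhood of $G_A(g)$ in $\R^{d+1}$ is sandwiched between the $T$-images of the $(\epsilon/L)$- and $(L\epsilon)$-neighbourhoods of $G_A(f)$, whose volumes differ from $\vol(G_A(f)+\B(0,\epsilon))$ only by a factor that vanishes logarithmically.
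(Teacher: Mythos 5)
Your argument is correct: the map $T(t,y)=(t,\,y+\mu t)$ is a linear shear with $T(G_A(f))=G_A(g)$, both $T$ and $T^{-1}$ are Lipschitz with constant $1+|\mu|$, and the packing-number comparison $P(G_A(g),\epsilon/L)\geq P(G_A(f),\epsilon)$ (together with the symmetric inequality obtained by exchanging $f$ and $g$) does give equality of the upper and lower Minkowski dimensions, since the constant rescaling of the radius is absorbed when dividing by $\log(1/\epsilon)$. The minor caveats (disjointness when the distance equals exactly $2\epsilon/L$, which you can fix by using radius $\epsilon/(2L)$, and the loose phrase about a factor ``vanishing logarithmically'' in the volume aside) do not affect the proof.

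The route is genuinely different from the paper's. The paper argues with grid cubes: it decomposes time into intervals of length $\epsilon$, observes that on each such interval the drift moves points by at most $N\epsilon$ with $N=\lceil\|\mu\|_\infty\rceil$, and shifts the cubes covering each column of $G_A(f)$ to produce a covering of $G_A(g)$ with at most $N^d$ times as many cubes; symmetry then gives equality. You instead package the same underlying observation---that a linear drift is a shear which distorts geometry at scale $\epsilon$ only by bounded factors---as a single global statement: $G_A(g)$ is a bi-Lipschitz (indeed affine, volume-preserving) image of $G_A(f)$, and Minkowski dimensions are invariant under bi-Lipschitz maps, which you verify directly from the packing-number definition used in the paper. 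Your version is shorter, avoids the column-by-column bookkeeping, and proves something more general (invariance under any bi-Lipschitz self-map, of which the shear is a special case); the paper's version is self-contained at the level of explicit cube counts, produces the explicit constant $N^d$, and stays within the covering formulation that the surrounding section (L\'evy's construction and Corollary~\ref{cor:dimaffinedrift}) then reuses. Either proof supports the subsequent application to piecewise affine $h$ via stability under finite unions.
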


\begin{proof}
	For $\epsilon \in (0, \infty)$ and $k \in \N$ define
	$$
	\C_\epsilon(k) = [(k-1) \epsilon, k \epsilon] \times\{\text{some cube of edge length $\epsilon$ in $\R^d$}\} \ \text{ and } \
	\C_\epsilon = \bigcup_{k \in \N} \C_\epsilon(k).
	$$
	Write $N = \lceil \|\mu\|_\infty\rceil$, and consider a covering of $G_A(f)$ by cubes of $\C_\epsilon$. Consider the cubes of the covering that are in $C_\epsilon(k)$, and thus form a covering of $G_{A\cap[(k-1) \epsilon, k \epsilon]}(f)$. Clearly, shifting them by the vector $(0,\mu_1(k-1)\epsilon,\ldots,\mu_d(k-1)\epsilon)$ produces a covering of
	$$
	G_{A\cap[(k-1) \epsilon, k \epsilon]}(f+ \mu (k-1) \epsilon).
	$$
	But within a time interval of length $\epsilon$, the drift cannot move $f(t)$ by more than $N \epsilon$ in any given direction. Therefore, $	G_{A \cap[(k-1)\epsilon, k \epsilon]}(g)$ may be covered with $N^d$ as many cubes of $\C_\epsilon(k)$ as are required to cover $G_{A \cap[(k-1)\epsilon, k \epsilon]}(f)$.
	It follows that the covering number of $G_A(g)$ with elements of $\C_\epsilon$ is at most $N^d$ times that of $G_A(f)$. Therefore,
	$$
	\underline{\dim}_M G_A(g) \leq \underline{\dim}_M G_A(f) \quad \text{and} \quad \overline{\dim}_M G_A(g) \leq \overline{\dim}_M G_A(f).
	$$
	Since $f(t) = g(t)- \mu t$, the same argument shows that
	$$
		\underline{\dim}_M G_A(f) \leq \underline{\dim}_M G_A(g) \quad \text{and} \quad \overline{\dim}_M G_A(f) \leq \overline{\dim}_M G_A(g),
	$$
	and completes the proof.
\end{proof}

The stability of Minkowski dimension under finite unions yields the following corollary.

\begin{corollary}\label{cor:dimaffinedrift}
	Let $f:[0,1] \to \R^d$ be a bounded measurable function, and $h: [0,1] \to \R^d$ be piecewise affine. Put $g = f+h$. Then, for every subset $A$ of $[0,1]$, we have
	$$
	\underline{\dim}_M G_A(f) = \underline{\dim}_M G_A(g) \quad \text{and} \quad  \overline{\dim}_M G_A(f) = \overline{\dim}_M G_A(g).
	$$
\end{corollary}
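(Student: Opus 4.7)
My plan is to carry out the proof of the preceding proposition essentially verbatim, now with $h$ piecewise affine rather than linear. The key property I will exploit is that a continuous piecewise affine function $h:[0,1]\to\R^d$ is Lipschitz: writing $0=a_0<a_1<\cdots<a_n=1$ for its breakpoints and $h(t)=c_j+\mu_j t$ on $I_j=[a_{j-1},a_j]$, continuity at the breakpoints gives a global Lipschitz constant $L:=\max_j\|\mu_j\|_\infty$.

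Fix $\epsilon>0$ and any covering of $G_A(f)$ by cubes of $\C_\epsilon$. For each $k$, the cubes of the covering that lie in $\C_\epsilon(k)$ cover $G_{A\cap[(k-1)\epsilon,k\epsilon]}(f)$; shifting them by the vector $(0,h((k-1)\epsilon))$ produces a covering of $G_{A\cap[(k-1)\epsilon,k\epsilon]}(f+h((k-1)\epsilon))$. Because $h$ is Lipschitz, $\|h(t)-h((k-1)\epsilon)\|_\infty\leq L\epsilon$ for every $t\in[(k-1)\epsilon,k\epsilon]$, so $G_{A\cap[(k-1)\epsilon,k\epsilon]}(g)$ lies within the $L\epsilon$-spatial enlargement of the shifted covering. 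Each enlarged cube is then covered by at most $(1+\lceil L\rceil)^d$ cubes of $\C_\epsilon(k)$. Summing over $k$, I obtain a $\C_\epsilon$-covering of $G_A(g)$ of size at most $(1+\lceil L\rceil)^d$ times the original.

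This uniform-in-$\epsilon$ bound on covering numbers immediately yields $\updim G_A(g)\leq\updim G_A(f)$ and $\lowdim G_A(g)\leq\lowdim G_A(f)$ after dividing logarithms of covering numbers by $\log(1/\epsilon)$ and passing to $\limsup$ and $\liminf$ respectively. The reverse inequalities follow by symmetry, applying the same argument with $g$ and $-h$ playing the roles of $f$ and $h$.

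The main subtlety is to run the argument uniformly in $\epsilon$ at the level of covering numbers, rather than partitioning $[0,1]$ into the affine pieces $I_j$, applying the preceding proposition on each, and then invoking stability under finite unions. The latter strategy succeeds for $\updim$, but not directly for $\lowdim$, since the lower Minkowski dimension fails to be finitely stable under finite unions in general (one has only $\lowdim(E_1\cup E_2)\geq\max(\lowdim E_1,\lowdim E_2)$). The covering-number bound above sidesteps this obstacle and delivers both dimension inequalities in a single step.
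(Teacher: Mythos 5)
Your proof is correct, and it takes a more careful route than the paper, whose entire proof of this corollary is the sentence preceding it: split $[0,1]$ into the finitely many intervals $I_j$ on which $h$ is affine, apply the preceding proposition on each piece (the constant term being a harmless translation), and invoke ``stability of Minkowski dimension under finite unions.'' As you rightly observe, that last step is literally valid only for $\updim$: lower Minkowski dimension is not finitely stable, and the pairwise equalities $\lowdim G_{A\cap I_j}(f)=\lowdim G_{A\cap I_j}(g)$ by themselves do not determine the dimensions of the unions. What makes the paper's route work nonetheless is that the proof of the proposition yields more than an equality of dimensions: it bounds the $\C_\epsilon$-covering number of the graph of the drifted function by a constant, independent of $\epsilon$, times that of the undrifted one; summing this bound over the finitely many pieces $I_j$ gives exactly the uniform covering-number comparison that you obtain in one stroke from the global Lipschitz constant $L=\max_j\|\mu_j\|_\infty$, and from such a comparison both the $\limsup$ and the $\liminf$ statements follow. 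So the two arguments rest on the same estimate; yours has the merit of making the required uniformity in $\epsilon$ explicit, which is precisely what the one-line appeal to finite stability glosses over in the case of $\lowdim$.

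Two minor points. First, your global Lipschitz bound uses continuity of $h$ at its breakpoints, which the statement of the corollary does not assert; if jumps were allowed, the estimate $\|h(t)-h((k-1)\epsilon)\|_\infty\le L\epsilon$ would fail on the finitely many strips containing a breakpoint, but this is easily repaired (split those strips at the breakpoints, or sum the per-piece covering bounds as above), and in the paper's application -- the L\'evy construction, where the $Y_{n-1}$ are continuous piecewise affine -- continuity holds, so your argument covers the intended use. Second, your constant $(1+\lceil L\rceil)^d$ should be enlarged slightly, since the shift $(0,h((k-1)\epsilon))$ need not be grid-aligned and the enlargement is two-sided; only its independence of $\epsilon$ matters, so nothing in the argument is affected.
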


\begin{proof}[{\bf Proof of Theorem \ref{thm:constancydimgraph}}]

	We only prove the result for the lower Minkowski dimension. The proof for the upper Minkowski dimension is identical.
	
	Consider L\'evy's construction of Brownian motion as
	$$
	B = \lim_{n \to \infty} Y_n = \lim_{n \to \infty} \sum_{k=1}^n X_k,
	$$
	where $(X_k, k \in \N)$ is an independent sequence of continuous piecewise affine random paths on $[0,1]$, and the convergence is uniform on $[0,1]$.
	
	For $n \in \N$, put $Z_n = B- Y_{n-1}$. Since $	B+f = Z_n + f + Y_{n-1}$	and $Y_{n-1}$ is piecewise affine, Corollary~\ref{cor:dimaffinedrift} implies that $\underline{\dim}_M G_A (B+f) = \underline{\dim}_M G_A (Z_n + f)$.
	In particular, for any $a>0$,
	$$
	\{ \underline{\dim}_M G_A (B+f) \leq a \} = \{\underline{\dim}_M G_A (Z_n + f) \leq a \} \in \sigma(X_k, k \geq n).
	$$
	Since this is true for every $n$, it follows that
	$$
	\Lambda_a=\{ \underline{\dim}_M G_A (B+f) \leq a \} \in \T =\bigcap_{n} \sigma(X_k,k\geq n)
	$$
	Therefore by Kolmogorov's 0-1 law $\pr{\Lambda_a} \in \{0,1\}$. It follows that the Minkowski dimension of $G_A(B+f)$ is almost surely constant.
\end{proof}

\begin{remark}\rm{
An alternative proof of Theorem~\ref{thm:constancydimimage} can be obtained by combining the above proof with Howroyd's projection theorem~\cite[Theorem~14]{Howroyd}.
}
\end{remark}

\section{Dimension of the image of $B+f$}\label{sec:image}

In this section we prove Theorems~\ref{thm:minkba} and~\ref{thm:dimimagelinearBM}.
We first recall Theorem~1.1 from Peres and Sousi~\cite{PeresSousi}, since it is going to be used to prove that the Minkowski dimension of the image and the graph of $B+f$ is larger than that of $B$.

\begin{theorem}[\cite{PeresSousi}]\label{thm:wienersausage}
Let $(B(s))_{s\geq 0}$ be a standard Brownian motion in $d\geq 1$ dimensions and let $(D_s)_{s \geq 0}$ be open sets in $\R^d$. For each $s$, let $r_s>0$ be such that $\vol(\B(0,r_s)) = \vol(D_s)$. Then for all $t$ we have that
\[
\E{\vol\left(\cup_{s \leq t}\left( B(s) + D_s  \right)\right)}\geq \E{\vol\left(\cup_{s \leq t}\B(B(s),r_s)\right)}.
\]
\end{theorem}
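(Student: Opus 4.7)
The plan is to prove this isoperimetric-type inequality via (i) a time discretization reducing to a finite union, and (ii) a Brascamp--Lieb--Luttinger (BLL) rearrangement inequality applied to the multi-Gaussian convolution that encodes the expected volume. The guiding intuition is that balls of a fixed volume are the most concentrated sets, so their Wiener sausage overlaps with itself the most along a Brownian trajectory, producing the smallest expected union volume.

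First I would pass to the discrete case by approximating $\cup_{s \le t}(B(s)+D_s)$ by $W_n = \cup_{i=0}^n(B(s_i)+D_{s_i})$ over partitions $0 = s_0 < \cdots < s_n = t$ whose mesh shrinks to zero. Under natural regularity of $s \mapsto D_s$ (e.g.\ right continuity in Hausdorff distance) combined with the path continuity of $B$, the sets $W_n$ approximate the full union from within, and monotone convergence yields convergence of expected volumes, so it suffices to prove $\E{\vol(W_n)} \ge \E{\vol(W_n^*)}$ for the ball analogue $W_n^* = \cup_i \B(B(s_i), r_{s_i})$. I would then write $\E{\vol(W_n)} = \int \pr{\exists i: y - B(s_i) \in D_{s_i}}\, dy$; after substituting $z_i = y - B(s_i)$ and unfolding the product-Gaussian joint density of $(B(s_i))$, each intersection volume $\E{\vol(\cap_{i\in S}(B(s_i)+D_{s_i}))}$ becomes a BLL-shaped multi-integral $\int \prod_{i\ge 2} p_{s_i - s_{i-1}}(z_{i-1} - z_i) \prod_{i \in S} \mathbf{1}_{D_{s_i}}(z_i)\, dz$ after integrating out $y$. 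Because the Gaussian kernels are symmetric-decreasing in the pairwise differences, BLL says this integral is maximized when each $\mathbf{1}_{D_i}$ is replaced by $\mathbf{1}_{\B(0, r_i)}$.

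The main obstacle is the alternating signs coming from inclusion--exclusion on the union: BLL controls each intersection layer with the correct sign on the layer, but the signed sum does not collapse directly to the desired union inequality. I would bypass this by an induction on $n$, replacing the sets one at a time: at step $k$, condition on $(B(s_i))_{i \ne k}$ and use the Markov property to isolate the convolution against $\mathbf{1}_{D_{s_k}}$. The single-step comparison reduces to showing $\E{\vol(A \cap (B(s_k) + D_{s_k}))} \le \E{\vol(A \cap (B(s_k) + \B(0, r_{s_k})))}$ for the random set $A = \cup_{i \ne k}(B(s_i)+D_{s_i})$, and this can be deduced from a classical two-function Riesz rearrangement once one averages over the joint distribution of $A$ and $B(s_k)$ (this averaging is what breaks the obstruction of $A$ being non-radial). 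Finally, I would pass to the continuous-time limit using uniform integrability: the crude bound $\vol(W_n) \le \sum_i \vol(D_{s_i})$ combined with the a.s.\ boundedness of $B$ on $[0, t]$ provides the required domination.
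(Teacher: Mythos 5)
The paper itself does not prove Theorem~\ref{thm:wienersausage}; it is imported verbatim from \cite{PeresSousi}, so your sketch has to stand on its own --- and its central step is false. The one-at-a-time replacement on which your induction rests, namely that for $A=\cup_{i\neq k}(B(s_i)+D_{s_i})$ one has $\E{\vol(A\cap(B(s_k)+D_{s_k}))}\leq \E{\vol(A\cap(B(s_k)+\B(0,r_{s_k})))}$, fails in general, even after averaging over the law of $A$. Take the discrete statement your induction is meant to prove with two times $s_1<s_2$, set $\tau=s_2-s_1$, and let $D_{s_1}=D_{s_2}=\B(Re_1,\rho)$ with $\rho$ fixed, $R$ large and $e_1$ a unit vector (any further partition times may carry balls of negligible radius). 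Writing $p_\tau$ for the density of $W=B(s_2)-B(s_1)$ and changing variables,
\begin{align*}
\E{\vol\bigl((B(s_1)+D_{s_1})\cap (B(s_2)+D)\bigr)} \;=\; \int_{\R^d} p_\tau(w)\,\vol\bigl(D_{s_1}\cap (D+w)\bigr)\,dw .
\end{align*}
For $D=D_{s_2}$ this is at least $\tfrac12\vol(\B(0,\rho))\,\pr{|W|\leq \rho/2}>0$, uniformly in $R$, whereas for $D=\B(0,\rho)$ it is at most $\vol(\B(0,\rho))\,\pr{|W-Re_1|\leq 2\rho}\to 0$ as $R\to\infty$. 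Since the expected union volume equals $\vol(D_{s_1})+\vol(D)$ minus the expected intersection volume, symmetrizing only $D_{s_2}$ \emph{strictly increases} the expected union volume for large $R$ --- the opposite of what your induction step requires --- and by symmetry of the example no ordering of the replacements helps. In particular the ``averaging over the joint distribution of $A$ and $B(s_k)$'' cannot rescue the step: the averaged statement is itself false. The obstruction is exactly the one you identified and then tried to sidestep: Riesz/BLL-type inequalities rearrange \emph{all} sets in the integrand simultaneously, and the theorem is genuinely a statement about symmetrizing the whole family $(D_s)$ at once; any correct proof, including the one in \cite{PeresSousi}, must perform a global rearrangement argument rather than replace the sets one by one.

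Two secondary points. The reduction to finitely many times cannot assume regularity of $s\mapsto D_s$ (none is hypothesized); the correct route is inner regularity: the unions are open sets, so pathwise their volumes are suprema over finite time sets of the corresponding volumes, and one then needs some care in exchanging this supremum with the expectation (monotone convergence along a suitable increasing sequence of finite time sets), not uniform integrability --- your proposed dominating bound $\sum_i\vol(D_{s_i})$ blows up as the mesh shrinks and dominates nothing. These are fixable, but the proposal founders on the replacement step above.
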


\begin{definition}\rm{
Let $G\subseteq \R^d$. We call a collection of balls $(\B(x_i,\epsilon))_i$ an $\epsilon$-packing of $G$ if $x_i \in G$ for all $i$ and the balls are pairwise disjoint.
}
\end{definition}

Given an $\epsilon$-packing of $f(A)$ by $P$ balls with centers $(f(t_i))$ we want to construct an $\epsilon$-packing of $(B+f)(A)$. The balls of radius $\epsilon$ centered at $(B+f)(t_i)$ might not all be disjoint; the following lemma controls the number of collisions.

\begin{lemma}\label{lem:technicalestimate2}
	Let $(B_t)$ be a standard Brownian motion in $d$ dimensions, $f:[0,1] \to \R^d$ a bounded measurable function and $A$ a subset of $[0,1]$.
Then there exists a positive constant $c$ such that for all $\epsilon >0$, if $(\B(f(t_i),\epsilon))_{i\leq P_\epsilon}$ is an $\epsilon$-packing of $f(A)$, then
	$$
	\max_{i\leq P_\epsilon} \E{N_i} \leq c \log (1/\epsilon)^{d+1},
	$$
	where $N_i = \#\{j: |(B+f)(t_i) -(B+f)(t_j)| < 2\epsilon\}$, for $i \in \{1, \dots, P_\epsilon\}$.
\end{lemma}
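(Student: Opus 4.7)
The plan is to bound $\E{N_i}$ by decomposing the defining sum both temporally and spatially around the center $t_i$. Write $s_j = |t_j - t_i|$ and $u_j = f(t_j) - f(t_i)$. By linearity of expectation,
\[
\E{N_i} = 1 + \sum_{j \neq i} \pr{B(t_j) - B(t_i) \in \B(-u_j, 2\epsilon)}.
\]
The increment $B(t_j) - B(t_i)$ is a centered Gaussian with covariance $s_j I_d$, while the $\epsilon$-packing hypothesis on $(f(t_i))_i$ gives $|u_j| \geq 2\epsilon$ and $|u_j - u_k| \geq 2\epsilon$ for $j \neq k$. The latter implies that the balls $(\B(-u_j, \epsilon))_j$ are pairwise disjoint, and a standard packing argument then shows the balls $(\B(-u_j, 2\epsilon))_j$ cover every point of $\R^d$ with multiplicity at most some constant $C_d$.

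The first step is a dyadic decomposition in $s_j$. For $T_\ell = \{j \neq i : 2^{-\ell} \leq s_j < 2^{-\ell+1}\}$, the Gaussian density $g_{s_j}$ is dominated pointwise by a single function $h_\ell(x) = C_d\, 2^{\ell d/2} \exp(-|x|^2 \cdot 2^{\ell-2})$, and a change of variables shows $\int_{\R^d} h_\ell$ is a constant independent of $\ell$. Using the bounded multiplicity,
\[
\sum_{j \in T_\ell} \int_{\B(-u_j, 2\epsilon)} g_{s_j}(x)\, dx \leq \int_{\R^d} h_\ell(x) \cdot \#\{j \in T_\ell : x \in \B(-u_j, 2\epsilon)\}\, dx \leq C_d \int_{\R^d} h_\ell,
\]
which is an absolute constant. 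Since $t_j \in [0,1]$, only bins with $\ell \leq 2\log_2(1/\epsilon)$ can contain indices with $s_j \geq \epsilon^2$, so the contribution from these is $O(\log(1/\epsilon))$.

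For the remaining $j$ with $s_j < \epsilon^2$, the density $g_{s_j}$ is of order $s_j^{-d/2} \gg 1$ at its peak, so the argument above fails and the decay must come from the Gaussian tail $\exp(-|u_j|^2/(c s_j))$. I decompose spatially: since $f$ is bounded, $|u_j| \leq 2\|f\|_\infty$, so the annuli $R_k = \{j : 2^{k+1}\epsilon < |u_j| \leq 2^{k+2}\epsilon\}$ for $k \geq 0$ cover all the $u_j$ and only $O(\log(1/\epsilon))$ of them are nonempty. The packing condition gives $|R_k| \leq C_d\, 2^{kd}$. For $k \geq 1$ and $s_j < \epsilon^2$, the change of variable $t = 4^k\epsilon^2/(2 s_j) \geq 2$ converts the density-volume bound into $\pr{B(t_j)-B(t_i) \in \B(-u_j, 2\epsilon)} \leq C_d\, t^{d/2} e^{-t}/2^{kd}$, and $t^{d/2}e^{-t}$ is bounded uniformly on $t \geq 2$, so each $R_k$ contributes $O(1)$; the exceptional annulus $|u_j| \in [2\epsilon, 4\epsilon]$ contains only $C_d$ indices by packing. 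Summing over $k$ and combining with the time-dyadic contribution gives $\E{N_i} = O(\log(1/\epsilon))$, well within the stated $c(\log(1/\epsilon))^{d+1}$ bound. The main obstacle is precisely this small-$s_j$ regime, where the interplay between the blow-up of the Gaussian density and the decay from spatial separation must be balanced by exploiting both the Gaussian tail and the $2\epsilon$-separation of the $u_j$ simultaneously.
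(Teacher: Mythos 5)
Your argument is correct, and it reaches a conclusion that is in fact sharper than the lemma requires: you obtain $\E{N_i}=O(\log(1/\epsilon))$ rather than $O(\log(1/\epsilon)^{d+1})$. The route is genuinely different from the paper's. The paper's primary decomposition is spatial: it splits the indices into dyadic annuli $S(k)=\{j:|f(t_i)-f(t_j)|\in[2^k\epsilon,2^{k+1}\epsilon)\}$, bounds $|S(k)|\leq (2^{k+1}+1)^d$ by the packing, and inside each annulus separates indices according to whether $|t_i-t_j|$ exceeds $(2^k\epsilon/\log(1/\epsilon))^2$, using the Gaussian density bound $p(j)\leq c\,\log(1/\epsilon)^d 2^{-dk}$ in the first case and the Gaussian tail in the second; each of the $O(\log(1/\epsilon))$ annuli then contributes $O(\log(1/\epsilon)^d)$, which is the source of the exponent $d+1$. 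You instead make the primary decomposition temporal (dyadic in $s_j=|t_j-t_i|$) and exploit the packing not through a cardinality count but through the bounded-overlap observation that the balls $\B(-u_j,2\epsilon)$ have multiplicity at most $3^d$, which lets you integrate the dominating Gaussian $h_\ell$ once per time scale and get an $O(1)$ contribution per dyadic bin uniformly over all spatial scales; only the regime $s_j<\epsilon^2$ is then handled by spatial annuli plus the Gaussian tail, in the same spirit as the paper's $S_2(k)$ case but with a cleaner change of variables. What your approach buys is the removal of the $\log(1/\epsilon)^d$ loss per annulus (hence the sharper bound and no need for the somewhat ad hoc threshold $(2^k\epsilon/\log(1/\epsilon))^2$); what the paper's buys is a shorter, single decomposition that is already sufficient for the application, where any polylogarithmic bound suffices. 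Two cosmetic points: your exceptional range should be stated as $2\epsilon\leq|u_j|\leq 4\epsilon$ so that, together with $R_k$, $k\geq1$, every $j\neq i$ with $s_j<\epsilon^2$ is covered (you essentially do this), and, as in the paper, the statement is implicitly for $\epsilon$ small, since for $\epsilon$ near $1$ the right-hand side degenerates while $N_i\geq1$.
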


\begin{proof}[{\bf Proof}]

Let $\epsilon>0$ and $(\B(f(t_i),\epsilon))_{i\leq P_\epsilon}$ an $\epsilon$-packing of $f(A)$.
We fix $i \in \{1,\ldots, P_\epsilon\}$. For every $k\in \N$ we define the sets
\begin{align*}
		S(k) 		& = \{ j : |f(t_i) - f(t_j)| \in [2^k \epsilon, 2^{k+1}\epsilon)\},\\
		S_1(k)		& = \left\{ j \in S(k) : |t_i - t_j| \geq \left( \frac{2^k\epsilon}{\log (1/\epsilon)}\right)^2\right\},\\
		S_2(k)	& = S(k) \setminus S_1(k).
	\end{align*}
Since $f$ is bounded, it follows that $S(k) = \emptyset$ whenever $k \geq c_1 \log(1/\epsilon)$, for a positive constant $c_1$.
Furthermore, if $j \in S(k)$, then since $(f(t_j))_j$ is an $\epsilon$-packing of $f(A)$, the balls 
$\{\B(f(t_j),\epsilon)\}_j$ are disjoint and for all such $j$ the ball $\B(f(t_j),\epsilon)$ is contained in $\B(f(t_i), (2^{k+1}+1)\epsilon)$.
Therefore
	\begin{equation}\label{eq:boundnumber}
		|S(k)| \leq \frac{\vol(\B(0, (2^{k+1}+1)\epsilon))}{\vol (\B(0, \epsilon))} \leq (2^{k+1} + 1)^d.
	\end{equation}
If we write $p(j) = \pr{|(B+f)(t_i) - (B+f)(t_j)| < 2\epsilon}$, then by the definition of $N_i$ we have
	\begin{align}\label{eq:expecni}
	\E{N_i} = \sum_{j=1}^{P_\epsilon} p(j) = \sum_{k=1}^{c_1\log(1/\epsilon)} \sum_{j \in S_1(k)} p(j) + \sum_{k=1}^{c_1\log(1/\epsilon)} \sum_{j \in S_2(k)} p(j).
	\end{align}
If $j \in S_1(k)$, then for a positive constant $c_2$ we have \begin{equation}\label{eq:estimateproba1}
	\begin{aligned}
		p(j) & = \frac{1}{(2\pi |t_i -t_j|)^{d/2}} \int_{\B(f(t_i)-f(t_j), 2 \epsilon)} \exp\left\{-\frac{|x|^2}{2|t_i- t_j|} \right\} dx\\
		& \leq \frac{\log(1/\epsilon)^d}{2^{dk} \epsilon^d (2\pi)^{d/2}} \vol(\B(0, 2 \epsilon))= c_2 \frac{\log(1/\epsilon)^d}{2^{dk} }.
	\end{aligned}
	\end{equation}
If $j\in S_2(k)$, then for a positive constant $c_3$ we have by the Gaussian tail estimate if $k\geq 2$
\begin{equation}\label{eq:estimateproba2}
		\begin{aligned}
			p(j)  \leq \pr{|B(t_i)-B(t_j)| > |f(t_i)-f(t_j)| - 2\epsilon}
				&\leq \pr{|B(t_i)-B(t_j)| > (2^k-2) \epsilon} \\
				 &\leq 2\exp\left(-c_3(\log(1/\epsilon))^2\right).
		\end{aligned}
	\end{equation}
Plugging the estimates~\eqref{eq:estimateproba1} and~\eqref{eq:estimateproba2} in~\eqref{eq:expecni} and using~\eqref{eq:boundnumber} concludes the proof of the lemma.
\end{proof}

\begin{proof}[{\bf Proof of Theorem~\ref{thm:minkba}}]

From Proposition~\ref{pro:general} we infer that a.s.
\begin{align}\label{eq:dimlogvol}
\updim (B+f)(A) = \limsup_{\epsilon \to 0} \frac{\log \E{V_{B+f}(A,\epsilon)}}{\log \frac{1}{\epsilon}}
\\
\label{eq:dimlogvol2} \lowdim (B+f)(A) = \liminf_{\epsilon \to 0} \frac{\log \E{V_{B+f}(A,\epsilon)}}{\log \frac{1}{\epsilon}}.
\end{align}
Let $D_s = \B(f(s),\epsilon)$ if $s\in A$ and $D_s = \emptyset$ if $s\notin A$. Then applying Theorem~\ref{thm:wienersausage} we get
\begin{align}\label{eq:volumesausage}
\E{V_{B+f}(A,\epsilon)} \geq \E{V_{B}(A,\epsilon)}.
\end{align}
From~\eqref{eq:dimlogvol},~\eqref{eq:dimlogvol2} and~\eqref{eq:volumesausage} we deduce the a.s.\ inequalities
\[
\updim (B+f)(A)  \geq \updim B(A) \ \text{ and } \ \lowdim (B+f)(A)  \geq \lowdim B(A).
\]
It now remains to show that almost surely
\begin{align}\label{eq:goalmaximum}
\updim (B+f)(A) \geq \updim f(A) \ \text{ and } \ \lowdim (B+f)(A) \geq \lowdim f(A).
\end{align}

First we note that in the definition of upper and lower Minkowski dimension it suffices to take $\epsilon$ which is tending to $0$ along powers of $2$.

We fix $k$ and consider a $2^{-k}$-packing of $f(A)$ with $P_k(f)$ balls. Let the centers of the balls be $f(t_i)$ with $t_i \in A$ for $i\in \{1,\ldots, P_k(f)\}$.

For every $i\in \{1,\ldots, P_k(f)\}$ define
	$$
	N_i = \#\{ j \neq i : |(B+f)(t_i) - (B+f)(t_j)| < 2^{1-k}\} \ \text{ and } \ G= \sum_{i=1}^{P_k(f)} \1(N_i  < k^2 \E{ N_i}).
	$$
	We call a point $t_i$ \emph{good} if $N_i < k^2 \E{ N_i}$ and \emph{bad} otherwise. Thus $G$ counts the number of good points.
By applying Markov's inequality twice we get
\begin{align}\label{eq:goodpoints}
\pr{G \leq \frac{P_k(f)}{2}}  = \pr{\sum_{i=1}^{P_k(f)} \1(N_i \geq k^2\E{N_i}) \geq \frac{P_k(f)}{2}} \leq \frac{2}{k^2}.
\end{align}
We now want to get a $2^{-k}$-packing of $(B+f)(A)$ from the packing of $f(A)$.
We do this by recursively picking good points $t_i$ and removing the $N_i$ balls $\B((B+f)(t_j),2^{-k})$ that intersect $\B((B+f)(t_i),2^{-k})$. This leaves us with a $2^{-k}$-packing of $(B+f)(A)$ with $\Psi_k$ balls, which on the event $\{G \geq P_k(f)/2\}$ satisfies
\[
\Psi_k \geq \frac{P_k(f)}{2(1+k^2 \max_i \E{N_i})} \geq \frac{P_k(f)}{2(1+ c k^2 \log(2^k)^{d+1})},
\]
where the last inequality follows from Lemma~\ref{lem:technicalestimate2}.
From~\eqref{eq:goodpoints} we now deduce that
\[
\pr{\Psi_k < \frac{P_k(f)}{2(1+ c k^2 \log(2^k)^{d+1})}} \leq \frac{2}{k^2},
\]
and hence by Borel Cantelli we conclude that a.s.\ eventually in $k$
\[
\Psi_k \geq \frac{P_k(f)}{2(1+ c k^2 \log(2^k)^{d+1})}.
\]
Taking $\log$ of both side, dividing by $\log(2^k)$ and taking $\limsup$ and $\liminf$ as $k \to \infty$ finishes the proof.
\end{proof}

\begin{remark}\rm{
We note that $\dim_M(B+f)(A)$ can be much larger than $\max\{ \dim_M B(A), \dim_M f(A)\}$. We recall an example given in~\cite[Example~5.4]{PS10}.
Let $d = 3$ and let $f(t)= (f_1(t),0,0)$, where $f_1$ is a fractional Brownian motion  independent of $B$ of Hurst index $\alpha$. Then
$\dim_M f[0,1] =1$ a.s. For $\alpha$ small we have that almost surely
$\dim_H (B+f)[0,1] = 3-2\alpha$, which is a special case of~\cite[Theorem~1]{Cuzick}. Since $\dim_M (B+f)[0,1] \geq \dim_H (B+f)[0,1]$, we get $\dim_M (B+f)(A) \geq 3-2\alpha$.
}
\end{remark}

We will now prove Corollary~\ref{thm:dimimagelinearBM}.
We start with a standard result about H\"older continuous functions and we include its proof here for the sake of completeness.

\begin{claim}\label{cl:holder}
Let $g: \R_+ \to \R$ be a $\gamma$-H\"older continuous function and $A_\beta = \{ n^{-\beta} : n \in \N\} \cup \{0\}$ for $\beta \in (0, \infty)$. Then
	$$
	\overline{\dim}_M g (A_\beta) \leq \frac{1}{1 + \gamma \beta}.
	$$
\end{claim}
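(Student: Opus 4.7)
The plan is to estimate the $\epsilon$-covering number of $g(A_\beta)$ by a head-tail split of $A_\beta$, then convert to the packing-number definition used in the paper. This conversion is immediate from the standard inequalities $P(A,\epsilon)\leq N_{\mathrm{cov}}(A,\epsilon)\leq P(A,\epsilon/2)$, which are already implicit in the proof of Claim~2.4. In particular, an upper bound on covering numbers of the form $C'\epsilon^{-s}$ transfers to the same bound (up to a constant) on $P(g(A_\beta),\epsilon)$ and yields $\overline{\dim}_M g(A_\beta)\leq s$.

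First I would fix $\epsilon>0$ and an integer $M$ to be chosen later, and decompose $A_\beta=H_M\cup T_M$, where
\[
H_M=\{n^{-\beta}:1\leq n<M\}, \qquad T_M=\{n^{-\beta}:n\geq M\}\cup\{0\}\subseteq[0,M^{-\beta}].
\]
The head $H_M$ has at most $M$ points, so $g(H_M)$ is covered by at most $M$ intervals of length $2\epsilon$. By $\gamma$-Hölder continuity of $g$ with some constant $C$, the image $g(T_M)$ is contained in an interval of length at most $CM^{-\beta\gamma}$, which can be covered by at most $\lceil CM^{-\beta\gamma}/(2\epsilon)\rceil+1$ intervals of length $2\epsilon$. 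Adding these contributions,
\[
N_{\mathrm{cov}}(g(A_\beta),\epsilon)\;\leq\; M+\frac{C M^{-\beta\gamma}}{2\epsilon}+2.
\]

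Next I would optimize by choosing $M=\lceil\epsilon^{-1/(1+\beta\gamma)}\rceil$, which balances the two main terms: both are of order $\epsilon^{-1/(1+\beta\gamma)}$. Hence $N_{\mathrm{cov}}(g(A_\beta),\epsilon)\leq C'\epsilon^{-1/(1+\beta\gamma)}$ for a constant $C'$ depending only on $C,\beta,\gamma$. Taking logarithms, dividing by $\log(1/\epsilon)$, and sending $\epsilon\to 0$ yields $\overline{\dim}_M g(A_\beta)\leq 1/(1+\beta\gamma)$.

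I do not anticipate any real obstacle: the whole argument is a single-parameter optimization of a head-tail decomposition, and the only analytic input is $\gamma$-Hölder continuity, used solely to bound the diameter of $g(T_M)$. The only point requiring a brief word in the write-up is the packing-vs.-covering conversion, which is standard.
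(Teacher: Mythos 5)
Your proof is correct and follows essentially the same route as the paper: the paper also splits $A_\beta$ into the finitely many points $\{n^{-\beta}: n\leq k\}$ and the tail near $0$, bounds the diameter of the tail's image by Hölder continuity, and optimizes the cutoff at $\epsilon^{-1/(1+\gamma\beta)}$. The only cosmetic difference is your explicit remark on the packing--covering comparison, which the paper handles by citing the equivalence of definitions in Falconer.
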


\begin{proof}
Without loss of generality we can assume that $g(0)=0$. Let $L$ be the H\"older constant of $g$. For $k \in \N$ and $n \geq k$, we have
	$$
	|g(n^{-\beta})| \leq L n^{-\gamma \beta} \leq L k^{-\gamma \beta}.
	$$
	Fix $\epsilon > 0$. The set $\{g\left(n^{-\beta}\right): n > k\}\cup \{0\}$ may be covered with $\lceil 2Lk^{-\gamma \beta} /\epsilon\rceil$ closed balls of diameter $\epsilon$. The set $\{g\left(n^{-\beta}\right) : n \leq k\}$ may be covered with $k$ such closed balls. Therefore, the covering number satisfies
	$$
	N(\epsilon) \leq \left\lceil \frac{2L k^{-\gamma \beta}}{\epsilon}\right\rceil + k.
	$$
	Taking $k$ of the order $\epsilon^{-1/(\gamma \beta +1)}$ shows that
	$$
	N(\epsilon) \leq c \epsilon^{-1/(1+\gamma \beta)},
	$$
	and hence the result follows immediately.
\end{proof}

\begin{proof}[{\bf Proof of Corollary~\ref{thm:dimimagelinearBM}}]

By Theorem~\ref{thm:minkba} it suffices to prove the inequalities for $f=0$. The case $d\geq 2$ follows from~\cite[Lemma~2.3(a)]{PerkinsTaylor}. The case $d=1$ can then be inferred by projecting a planar Brownian motion on a line in a random direction and applying~\cite[Theorem~3]{FalconerHowroyd}. However, we give a self-contained proof below.

We set $\alpha = \updim A$ and $\beta = \lowdim A$.

Again we take $\epsilon$ tending to $0$ along powers of $2$. Let $\delta_k = 2^{-2k/(\alpha+1)}$ and consider a $\delta_k$-packing of $A$ with $P_{\delta_k}(A)$ balls with centers $(t_i)_{i\leq P_{\delta_k}(A)}$.
		We call a point $t_i$ \emph{good} if
		$$
		N_i = \#\{ j \neq i : |B(t_i) - B(t_j)| < 2^{1-k}\} < k^2 \E{N_i},
		$$
		and \emph{bad} otherwise.
Let $G=G_k$ denote the number of good points. Then by Markov's inequality as in~\eqref{eq:goodpoints} we get that
\[
\pr{G \leq \frac{P_{\delta_k}(A)}{2}} \leq \frac{2}{k^2}.
\]

We now want to get a $2^{-k}$-packing of $B(A)$ from the $\delta_k$-packing of $A$.
We do this by recursively picking good points $t_i$ and removing the
 $N_i$ balls $\B(B(t_j),2^{-k})$ that intersect $\B(B(t_i), 2^{-k})$. This yields a $2^{-k}$-packing of $B(A)$ with $\Psi_k$ balls, which on the event $\{G\geq P_{\delta_k}(A)/2\}$ satisfies
		\begin{equation}\label{eq:packingnumbergoodpoints}
		\Psi_k \geq \frac{P_{\delta_k}(A)}{2(1+ k^2 \max_i \E{N_i})}.
		\end{equation}
Since the points $(t_i)$ are a $\delta_k$-packing of the set $A$, it follows that $|t_{i+\ell} - t_i| \geq \delta_k \ell$, and hence we  can bound
\begin{align*}
\E{N_i} = \sum_{\substack{j=1 \\ j \neq i}}^{P_{\delta_k}(A)} \pr{|(B+f)(t_i) - (B+f)(t_j)| < 2^{1-k}} \leq \sum_{\substack{\ell = -i+1 \\ \ell \neq 0}}^{P_{\delta_k}(A)} \frac{c 2^{-k}}{\sqrt{|\ell | \delta_k}} \leq c'  \sqrt{P_{\delta_k}(A)}.
\end{align*}
Substituting this in~\eqref{eq:packingnumbergoodpoints} we get
\[
\pr{\Psi_k < \frac{P_{\delta_k}(A)}{2(1+ c' k^2\sqrt{P_{\delta_k}(A)}) }} \leq \frac{2}{k^2},
\]
and hence by Borel Cantelli again we get that a.s.\ eventually in $k$
\begin{align}\label{eq:psi}
\Psi_k \geq \frac{P_{\delta_k}(A)}{2(1+ c' k^2 \sqrt{P_{\delta_k}(A)} )}.
\end{align}
Taking $\log$ of both sides, dividing by $\log(2^k)$ and taking $\limsup$ and $\liminf$ as $k\to \infty$ concludes the proof of the first part of the corollary.

%

For the second part, let $\alpha>0$ and set $\beta = 1/\alpha -1$. It is easy to check that the set $A_\beta$ from Claim~\ref{cl:holder} satisfies $\dim_M A_\beta=(1+\beta)^{-1}= \alpha$.
Now pick $\gamma < 1/2$. Since almost all Brownian paths are $\gamma$-H\"older continuous, Claim~\ref{cl:holder} guarantees that
	$$
	\overline{\dim}_M B(A_\beta) \leq \frac{1}{1+\gamma \beta} \to \frac{1}{1 + \beta/2} = \frac{2 \alpha}{\alpha + 1} \text{ as } \gamma \to 1/2.
	$$
\end{proof}

\section{Dimension of the graph}\label{sec:graph}

In this section we give the proofs of the theorems stated in the Introduction concerning the Minkowski dimension of the graph of $B+f$.

We start by proving a lemma analogous to Lemma~\ref{lem:technicalestimate2} in the case of the graph.

\begin{lemma}\label{lem:technicalestimate3}
	Let $(B_t)$ be a standard Brownian motion in $d$ dimensions, $f:[0,1] \to \R^d$ a bounded measurable function, and $A$ a subset of $[0,1]$.
Then there exists a positive constant $c$ such that for all $\epsilon>0$, if $(\B((t_i,f(t_i)),2 \epsilon))_{i\leq P_\epsilon}$ is a $4\epsilon$-packing of $G_A(f)$, then
\[
\max_{i\leq P_\epsilon} \E{N_i} \leq c \log (1/\epsilon)^{d+1},
\]
where $N_i = \#\{j: |(t_i,(B+f)(t_i)) -(t_j,(B+f)(t_j))| < 2\epsilon\}$, for
	 $i \in \{1, \dots, P_\epsilon\}$.
\end{lemma}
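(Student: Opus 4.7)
The plan is to mirror the proof of Lemma~\ref{lem:technicalestimate2} almost verbatim, after observing one simplification peculiar to the graph setting: writing
\[
p(j) = \pr{|(t_i,(B+f)(t_i)) - (t_j,(B+f)(t_j))| < 2\epsilon},
\]
one has $p(j) = 0$ unless $|t_i - t_j| < 2\epsilon$, so only indices $j$ with $|t_i - t_j| < 2\epsilon$ contribute to $\E{N_i}$.

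Fixing $i$, I would define, for $k \in \N$,
\begin{align*}
S(k) &= \{ j \neq i : |f(t_i) - f(t_j)| \in [2^k \epsilon, 2^{k+1}\epsilon),\ |t_i - t_j| < 2\epsilon\},\\
S_1(k) &= \{ j \in S(k) : |t_i - t_j| \geq (2^k \epsilon / \log(1/\epsilon))^2\},\\
S_2(k) &= S(k) \setminus S_1(k),
\end{align*}
so that by boundedness of $f$, $S(k) = \emptyset$ for $k \geq c_1 \log(1/\epsilon)$ (and for $k = 0$, the $4\epsilon$-packing hypothesis together with $|f(t_i) - f(t_j)| < 2\epsilon$ forces $|t_i - t_j| \geq 2\sqrt{3}\epsilon$, so $S(0)$ contributes nothing). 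The key counting step is then a volume comparison in $\R^{d+1}$: since the balls $\B((t_j,f(t_j)), 2\epsilon)$ are pairwise disjoint by the packing hypothesis, and for $j \in S(k)$ all lie inside the cylinder $\{|t - t_i| < 4\epsilon\} \times \{|x - f(t_i)| < (2^{k+1}+2)\epsilon\}$, comparing volumes yields
\[
|S(k)| \cdot c_{d+1} (2\epsilon)^{d+1} \leq 8\epsilon \cdot c_d ((2^{k+1}+2)\epsilon)^d,
\]
so $|S(k)| \leq c \cdot 2^{kd}$. The crucial point is that the $\epsilon^{d+1}$ factors cancel, so we recover the same $2^{kd}$ bound as in the image case — the extra dimension is absorbed by the thin time-slab of width $\mathcal{O}(\epsilon)$.

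For the probability bounds, exactly as in Lemma~\ref{lem:technicalestimate2}, I would use the Gaussian density estimate to get $p(j) \leq c \log(1/\epsilon)^d / 2^{kd}$ for $j \in S_1(k)$, and the Gaussian tail estimate applied to $|B(t_i) - B(t_j)| > (2^k - 2)\epsilon$ for $k \geq 2$ and $j \in S_2(k)$ to get $p(j) \leq 2\exp(-c_3 \log(1/\epsilon)^2)$. Summing $p(j)$ over $j$ in each $S_\ell(k)$ using the counting bound $|S(k)| \leq c \cdot 2^{kd}$, and then summing over $k \leq c_1 \log(1/\epsilon)$, produces $\E{N_i} \leq c \log(1/\epsilon)^{d+1}$ as required.

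The only non-routine ingredient is the counting bound for $|S(k)|$; in particular ensuring that the volume argument produces $2^{kd}$ rather than $2^{k(d+1)}$, which is what keeps the final logarithmic factor at $\log(1/\epsilon)^{d+1}$ and allows the packing construction in Theorem~\ref{thm:dimgraphBMbrownian} to go through unchanged. Everything else is a direct transcription of the image-case argument.
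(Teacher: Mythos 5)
Your proposal is correct and follows the paper's argument in all essentials: the same reduction of the graph probability to $\pr{|(B+f)(t_i)-(B+f)(t_j)|<2\epsilon}$, the same dyadic decomposition into $S(k)$, $S_1(k)$, $S_2(k)$, and the same Gaussian density estimate on $S_1(k)$ and tail estimate on $S_2(k)$, summed over the $O(\log(1/\epsilon))$ relevant scales. The one step you handle differently is the counting bound $|S(k)|\leq c\,2^{kd}$: the paper projects to the spatial coordinate and asserts that the balls $\B(f(t_j),\epsilon)$, $j\in S(k)$, are pairwise disjoint (its proof actually works with a packing by balls of radius $4\epsilon$, i.e.\ centers separated by $8\epsilon$, which is what makes that projected disjointness valid), whereas you compare volumes directly in $\R^{d+1}$, fitting the disjoint balls $\B((t_j,f(t_j)),2\epsilon)$ into a cylinder of height of order $\epsilon$ and radius of order $2^{k}\epsilon$; the powers of $\epsilon$ cancel and you recover the same $2^{kd}$ bound. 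Your variant has the minor advantage of working verbatim under the hypothesis as stated (radius-$2\epsilon$ balls, $4\epsilon$-separated centers), and your observation that any $j\neq i$ with $|t_i-t_j|<2\epsilon$ must satisfy $|f(t_i)-f(t_j)|\geq 2\sqrt{3}\,\epsilon$, so the small-$k$ terms vanish, is the correct substitute for the paper's implicit use of the same fact. Both routes give $\E{N_i}\leq c\log(1/\epsilon)^{d+1}$; there is no gap.
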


\begin{proof}[{\bf Proof}]

Let $\epsilon>0$ and $(\B((t_i,f(t_i)),4\epsilon))_{i\leq P_\epsilon}$ a $4\epsilon$-packing of $G_A(f)$. We fix $i\in \{1,\ldots,P_\epsilon\}$. For every $k\in \N$ we define the sets
\begin{align*}
		S(k) 		& = \{ j : |t_i -t_j| <2 \epsilon \ \text{ and } \ |f(t_i) - f(t_j)| \in [2^k \epsilon, 2^{k+1}\epsilon)\},\\
		S_1(k)		& = \left\{ j \in S(k) : |t_i - t_j| \geq \left( \frac{2^k\epsilon}{\log (1/\epsilon)}\right)^2\right\},\\
		S_2(k)	& = S(k) \setminus S_1(k).
	\end{align*}
Again since $f$ is bounded, $S(k)=\emptyset$ when $k \geq c_1\log(1/\epsilon)$. Furthermore, if $j\in S(k)$, then since $(t_j,f(t_j))_j$ is a $4\epsilon$-packing of $G_A(f)$, the balls $\{\B(f(t_j),\epsilon)\}_j$ are disjoint and for all such $j$ the ball $\B(f(t_j),\epsilon)$ is contained in $\B(f(t_i), (2^{k+1}+1)\epsilon)$. Hence
\begin{equation}\label{eq:boundnumbergraph}
		|S(k)| \leq \frac{\vol(\B(0, (2^{k+1}+1)\epsilon))}{\vol (\B(0, \epsilon))} \leq (2^{k+1} +1)^d.
	\end{equation}
	We now set
	\[
	q(j) = \pr{|( t_i, (B+f)(t_i)) - ( t_j, (B+f)(t_j))| < 2\epsilon}
		\leq \pr{| (B+f)(t_i) - (B+f)(t_j) | < 2 \epsilon}.
	\]
Proceeding as for the estimate for $p(j)$ in Lemma~\ref{lem:technicalestimate2} gives that if $j \in S_1(k)$, then for a positive constant $c_2$
	$$
	q(j) \leq c_2 2^{-dk} \log(1/\epsilon)^d.
	$$
Similarly to the proof of Lemma~\ref{lem:technicalestimate2}, if $j \in S_2(k)$ for $k\geq 2$, we get for a positive constant $c_3$
\[
q(j) \leq 2\exp\left(-c_3(\log(1/\epsilon))^2\right).
\]
Plugging these two estimates above in the expression for $\E{N_i}$ and using~\eqref{eq:boundnumbergraph} we deduce
\[
\E{N_i} \leq c \log(1/\epsilon)^{d+1},
\]
where $c$ is a positive constant
and this concludes the proof of the lemma.
\end{proof}

\begin{proof}[{\bf Proof of Theorem~\ref{thm:dimgraphBMbrownian}}]

Let $\epsilon>0$ and
\[
\C_{\epsilon,d+1} = \{ [(\ell_1-1)\epsilon,\ell_1\epsilon] \times \ldots \times [(\ell_{d+1} -1)\epsilon,\ell_{d+1} \epsilon]:\ \ \ell_1,\ldots,\ell_{d+1} \in \Z\}.
\]
By~\cite[Definition~3.1]{Falconer} the Minkowski dimension  of $G_A(B+f)$ is determined by counting the boxes in $\C_{\epsilon,d+1}$ that intersect $G_A(B+f)$.
We take $\epsilon$ tending to $0$ along powers of $2$, i.e.\ take $\epsilon=2^{-n}$. Let the minimal number of boxes in the covering be $N(\epsilon) = N_n$. Setting $I_k = [(k-1)2^{-n},k2^{-n}]$ and
\[
A_{k,n} =\# \{\text{boxes in } \C_{\epsilon,d} \text{ intersecting } (B+f)(I_k \cap A) \}
\]
we have $N_n= \sum_{k=1}^{2^n} A_{k,n}$. Just like in the proof of Claim~\ref{cl:minkvolgeneral} we get that there exist positive constants $c_1$ and $c_2$ such that
\begin{align*}
 c_1 2^{nd} \sum_{k=1}^{2^n} V_{B+f}(I_k\cap A,2^{-n})  &\leq N_n \leq c_2 2^{nd} \sum_{k=1}^{2^n} V_{B+f}(I_k\cap A,2^{-n})   \ \text{ and } \\
 c_1 2^{(d+1)n}V_{G(B+f)}(A,2^{-n})  & \leq N_n \leq c_2 2^{(d+1)n}V_{G(B+f)}(A,2^{-n}),
\end{align*}
where $G(B+f)$ stands for the process $(t,B(t) + f(t))_t$.

Since the process $(s,B(s))$ is c{\`a}dl{\`a}g and has independent and stationary increments and $(s,f(s))$ is c{\`a}dl{\`a}g, Lemma~\ref{lem:volconc} together with the above inequalities give that a.s.
\[
\updim G_A(B+f)= \limsup_{n\to \infty} \frac{\log \left( \sum_{k=1}^{2^n} \E{V_{B+f}(I_k\cap A,2^{-n})}  \right)}{\log(2^n)} + d.
\]
Fix $k \in \{1,\ldots,2^n\}$. For $s\in I_k \cap A$ define $D_s = \B(f(s),2^{-n})$ and for $s\notin I_k\cap A$ let $D_s = \emptyset$. Then Theorem~\ref{thm:wienersausage} gives
\[
\E{V_{B+f}(I_k\cap A,2^{-n})}  \geq \E{V_{B}(I_k\cap A,2^{-n})  },
\]
and hence it follows that a.s.
\[
\updim G_A(B+f) \geq \updim G_A(B).
\]
The inequality for the lower Minkowski dimension of $G_A(B+f)$ follows in exactly the same way.

It now remains to show that a.s.
\[
\updim G_A(B+f) \geq \updim G_A(f)
\]
and similarly for lower Minkowski.
The proof of that follows in the same way as the proof of Theorem~\ref{thm:minkba}.
We point out the differences. We call a point $(t_i,f(t_i))$ good if $N_i < k^2 \E{N_i}$, where $N_i$ is as defined in the statement of Lemma~\ref{lem:technicalestimate3} for $\epsilon = 2^{-k}$.
Then we proceed in exactly the same way as in Theorem~\ref{thm:minkba} and on the event that the number of good points is at least $P_k(f)/2$ we get a $2^{-k}$-packing of $G_A(f)$ with at least
\[
\frac{P_k(f)}{2(1+k^2 \max_i \E{N_i})}
\]
balls of radius $2^{-k}$. Using Lemma~\ref{lem:technicalestimate3} and the Borel Cantelli lemma as in Theorem~\ref{thm:minkba} concludes the proof.
\end{proof}

The rest of this section is devoted to the proof of Theorem~\ref{thm:graphlinearBM}.

\begin{proposition}\label{pro:minkdimsumfg}
	Let $f$ and $g:[0,1] \to \R$ be two continuous functions. Assume that $\dim_M G_{[0,1]}(f)$ exists. Then,
	\begin{align*}
	\underline{\dim}_M G_{[0,1]} (f+g) &\leq \max\{{\dim}_M G_{[0,1]}(f), \underline{\dim}_M G_{[0,1]}(g)\},\\
	\overline{\dim}_M G_{[0,1]} (f+g) &\leq \max\{{\dim}_M G_{[0,1]}(f), \overline{\dim}_M G_{[0,1]}(g)\}.
	\end{align*}
	Furthermore, in both cases, when the dimensions on the right hand side are different, we even have equality.
\end{proposition}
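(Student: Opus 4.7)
The plan is to obtain the inequality via a covering-number comparison on columns of width $\epsilon$. First I would partition $[0,1]$ into $\lceil 1/\epsilon\rceil$ intervals of length $\epsilon$ and observe that, for any continuous $h$ and interval $I$ of length $\epsilon$, the number of axis-parallel $\epsilon$-cubes in $\R^2$ meeting $G_I(h)$ lies between $\lceil \operatorname{osc}_I(h)/\epsilon \rceil$ and $\lceil \operatorname{osc}_I(h)/\epsilon \rceil + 1$. Combining $\operatorname{osc}_I(f+g)\le\operatorname{osc}_I(f)+\operatorname{osc}_I(g)$ with a sum over the $\lceil 1/\epsilon\rceil$ columns yields the key pointwise bound
\[
N_\epsilon(f+g)\ \le\ N_\epsilon(f)+N_\epsilon(g)+\lceil 1/\epsilon\rceil ,
\]
where $N_\epsilon(h)$ denotes the $\epsilon$-cube covering number of $G_{[0,1]}(h)$.

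Taking logarithms and dividing by $\log(1/\epsilon)$, and noting that $N_\epsilon(h)\ge\lceil 1/\epsilon\rceil$ for any continuous $h$ so that the additive $\lceil 1/\epsilon\rceil$ term is harmless, the upper-dimension inequality follows from the identity $\limsup\max(a_\epsilon,b_\epsilon)=\max(\limsup a_\epsilon,\limsup b_\epsilon)$. The lower-dimension inequality uses the weaker identity $\liminf\max(a_\epsilon,b_\epsilon)\le\max(\lim a_\epsilon,\liminf b_\epsilon)$, which holds provided $\lim a_\epsilon$ exists; this is exactly the role played by the hypothesis that $\dim_M G_{[0,1]}(f)$ exists.

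For the equality statement I would apply the inequality just proved to the symmetric rewriting $g=(-f)+(f+g)$. Since $\dim_M G_{[0,1]}(-f)=\dim_M G_{[0,1]}(f)$ still exists, this produces $\updim G_{[0,1]}(g)\le\max\{\dim_M G_{[0,1]}(f),\updim G_{[0,1]}(f+g)\}$ and the analogous bound for $\lowdim$. When the relevant dimension of $G_{[0,1]}(g)$ strictly exceeds $\dim_M G_{[0,1]}(f)$, it must therefore be $\le$ the corresponding dimension of $G_{[0,1]}(f+g)$, and combining with the upper bound established above yields equality.

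The main obstacle is the complementary configuration, in which the relevant dimension of $G_{[0,1]}(g)$ is strictly less than $\alpha:=\dim_M G_{[0,1]}(f)$; here the decomposition trick does not by itself give a lower bound on $\dim G_{[0,1]}(f+g)$. For this case I would rewrite the key estimate as $N_\epsilon(f+g)\ge N_\epsilon(f)-N_\epsilon(g)-\lceil 1/\epsilon\rceil$ and exploit that $N_\epsilon(f)=\epsilon^{-\alpha+o(1)}$ \emph{uniformly} in $\epsilon$, a consequence of $\dim_M G_{[0,1]}(f)$ existing rather than merely being bounded. Provided $N_\epsilon(g)$ remains of lower order than $N_\epsilon(f)$ at the scales needed, this delivers $N_\epsilon(f+g)\ge(1-o(1))N_\epsilon(f)$ and forces the matching lower bound on $\dim_M G_{[0,1]}(f+g)$.
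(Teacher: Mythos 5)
Your treatment of the two displayed inequalities is correct and essentially the same as the paper's: the paper also counts grid squares column by column, via the oscillation quantities $\Omega_{k,n}(h)=\lceil 2^n(\max_{I_k}h-\min_{I_k}h)\rceil$, proves the subadditivity $\Omega_{k,n}(f+g)\le\Omega_{k,n}(f)+\Omega_{k,n}(g)$, and then passes to a subsequence of scales realizing $\lowdim G_{[0,1]}(g)$, using the existence of $\dim_M G_{[0,1]}(f)$ exactly where you use it. For the equality statement in the case where the dimension of $g$ is the larger one, your decomposition $g=(-f)+(f+g)$ is a genuinely different and rather slicker route: the paper instead re-runs the counting with the reverse inequality $\Omega_{k,n}(g)-\Omega_{k,n}(f)\le\Omega_{k,n}(f+g)$, whereas you simply reuse the inequality already proved (noting $\dim_M G_{[0,1]}(-f)=\dim_M G_{[0,1]}(f)$), and your trick handles the upper and lower statements simultaneously.

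The complementary configuration is where the real difficulty sits, and your proviso is not a removable formality. For the \emph{upper} dimension it is automatic: if $\updim G_{[0,1]}(g)<\alpha$ then $N_\epsilon(g)\le\epsilon^{-\updim G_{[0,1]}(g)-\delta}$ at \emph{every} small scale, so $N_\epsilon(f+g)\ge N_\epsilon(f)-N_\epsilon(g)-2\lceil 1/\epsilon\rceil\ge\tfrac14\epsilon^{-(\alpha-\delta)}$ eventually, and that case closes; you should say this explicitly. For the \emph{lower} dimension, however, $\lowdim G_{[0,1]}(g)<\alpha$ only controls $N_\epsilon(g)$ along \emph{some} sequence of scales, while bounding $\lowdim G_{[0,1]}(f+g)$ from below requires an estimate along \emph{every} sequence; at scales where $N_\epsilon(g)$ is comparable to $N_\epsilon(f)$ your difference bound gives nothing, and no argument can repair this in general, because the asserted equality can fail when $\lowdim G_{[0,1]}(g)<\dim_M G_{[0,1]}(f)\le\updim G_{[0,1]}(g)$. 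For instance, take $f(x)=\sum_{k\ge1}\lambda^{-k/2}\cos(\lambda^k x)$ with $\lambda$ a large integer, so that $\dim_M G_{[0,1]}(f)=3/2$ with oscillation of order $\lambda^{-n/2}$ on every interval of length $\lambda^{-n}$, and let $g$ be minus the sum of the same terms over frequencies $k$ lying in lacunary blocks $[m_{2j},m_{2j+1})$ with $m_{i+1}/m_i\to\infty$. At scales $\lambda^{-2m_{2j+1}}$ the surviving frequencies of $g$ below the scale stop at $m_{2j+1}$ and the next ones are far above, so $\lowdim G_{[0,1]}(g)\le 5/4<3/2$; but $f+g$ is the same kind of sum over the complementary blocks, and the same computation at scales $\lambda^{-2m_{2j}}$ gives $\lowdim G_{[0,1]}(f+g)\le 5/4<3/2=\max\{\dim_M G_{[0,1]}(f),\lowdim G_{[0,1]}(g)\}$. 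So in the lower-dimension statement the equality in this configuration needs the extra hypothesis $\updim G_{[0,1]}(g)<\dim_M G_{[0,1]}(f)$, under which your estimate does work; note this is exactly the case the paper's own proof leaves to the phrase that the other cases are treated similarly, its explicit argument covering only the configuration you disposed of with the decomposition trick.
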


\begin{proof}[{\bf Proof}]

	We shall only prove the inequality for the lower Minkowski dimension. The other case is proved similarly.
	
Set $\alpha = \dim_M G_{[0,1]}(f)$ and $\beta = \lowdim G_{[0,1]}(g)$ and consider the collection of squares
	$$
	\C_\epsilon = \{ [(k-1)\epsilon, k \epsilon] \times [(\ell-1) \epsilon, \ell \epsilon]: k, \ell \in \Z\}.
	$$
	Let $h:[0,1]\to \R$ be a continuous function.
	A covering of $G_{A}(h)$ is given by taking all the elements of $\C_\epsilon$ that intersect $G_{[0,1]}(h)$; and that many are needed. Let $S_\epsilon(h)$ be the number of these squares. Take $\epsilon=2^{-n}$ and set $I_k= [(k-1)\epsilon,k\epsilon]$ and $\Omega_{k,n}(h) = \lceil 2^n(\max_{s\in I_k} h(s) - \min_{s\in I_k} h(s)) \rceil$. Then it is easy to see that
\begin{align}\label{eq:numberofsquares}
\sum_{k=1}^{2^n} \Omega_{k,n}(h)\leq S_{2^{-n}}(h) \leq  2\sum_{k=1}^{2^n}  \Omega_{k,n}.
\end{align}		
It is straightforward to check that
\begin{align}\label{eq:twosides}
\Omega_{k,n}(g) -\Omega_{k,n}(f)  \leq \Omega_{k,n}(f+g) \leq \Omega_{k,n}(f) + \Omega_{k,n}(g).
\end{align}
Let $(\epsilon_n)$ be a subsequence of $(2^{-n})$ along which $\log S_{\epsilon_n}(g)/\log(1/\epsilon_n) \to \beta$ as $n\to \infty$. Fix $\delta\in (0,\infty)$. Then $S_{\epsilon_n}(g) \leq \epsilon_n^{-\beta -\delta}$ for all $n$ large enough. Since $\dim_M G_{[0,1]}(f)$ exists, it follows that for all $n$ large enough $S_{\epsilon_n}(f) \leq \epsilon_n^{-\alpha - \delta}$. Thus for all $n$ sufficiently large we obtain
\[
S_{\epsilon_n}(f+g) \leq 2S_{\epsilon_n}(f) + 2S_{\epsilon_n}(g) \leq 4\epsilon_n^{-\max\{\alpha,\beta\} - \delta}.
\]
Taking logarithms of both sides, dividing by $\log(1/\epsilon_n)$ and taking the limit as $n\to \infty$ gives that for all $\delta>0$
\[
\lowdim G_{[0,1]}(f+g) \leq \max\{\alpha,\beta\} + \delta,
\]
and hence letting $\delta\to 0$ gives $\lowdim G_{[0,1]}(f+g) \leq \max\{\alpha,\beta\}$.

It only remains to show the final statement of the proposition. Suppose that $\alpha <\beta$. We will show that
\[
\lowdim G_{[0,1]}(f+g) \geq \beta.
\]
The other cases are treated similarly.

Take $\delta>0$ small enough
so that $\beta >\alpha + 2\delta$. Using~\eqref{eq:numberofsquares} and the left hand side inequality of~\eqref{eq:twosides} we deduce that for all $n$ sufficiently large
\[
S_n(f+g) \geq \frac{2^{n(\beta - \delta)} }{2} - 2^{n(\alpha+\delta)},
\]
and hence it easily follows that in this case $\lowdim G_{[0,1]}(f+g) \geq \beta$, which together with the inequality previously shown completes the proof.
\end{proof}

\begin{proof}[{\bf Proof of Theorem \ref{thm:graphlinearBM}}]
The theorem follows directly from Proposition~\ref{pro:minkdimsumfg} and Theorem~\ref{thm:dimgraphBMbrownian}.
\end{proof}

\section{Example}\label{sec:example}

\begin{example}
Let $\phi:\R_+ \to \R$ be a function with period $1$ defined in $[0,1]$ via $\phi(x)=\max\{x,1-x\}$.
For $n\in \N$ we define $\psi_n:[0,1] \to \R_+$ via
\[
\psi_n(x)  = n^{-3/4} \lfloor \sqrt{n} \phi(nx) \rfloor
\]
adjusted to be c{\`a}dl{\`a}g. Let $n_k = 2^{6^k}$ and define $f = \sum_{k=1}^{\infty} \psi_{n_k}$. Since $f$ is the uniform
limit of c{\`a}dl{\`a}g functions, it is also c{\`a}dl{\`a}g.
We will show that
\[
\updim G_{[0,1]}(f) = \frac{5}{3} \ \text{ and } \ \updim G_{[0,1]}(B+f) \geq \frac{7}{4} \text{ a.s.}
\]
\end{example}

\begin{figure}
\begin{center}
\epsfig{file=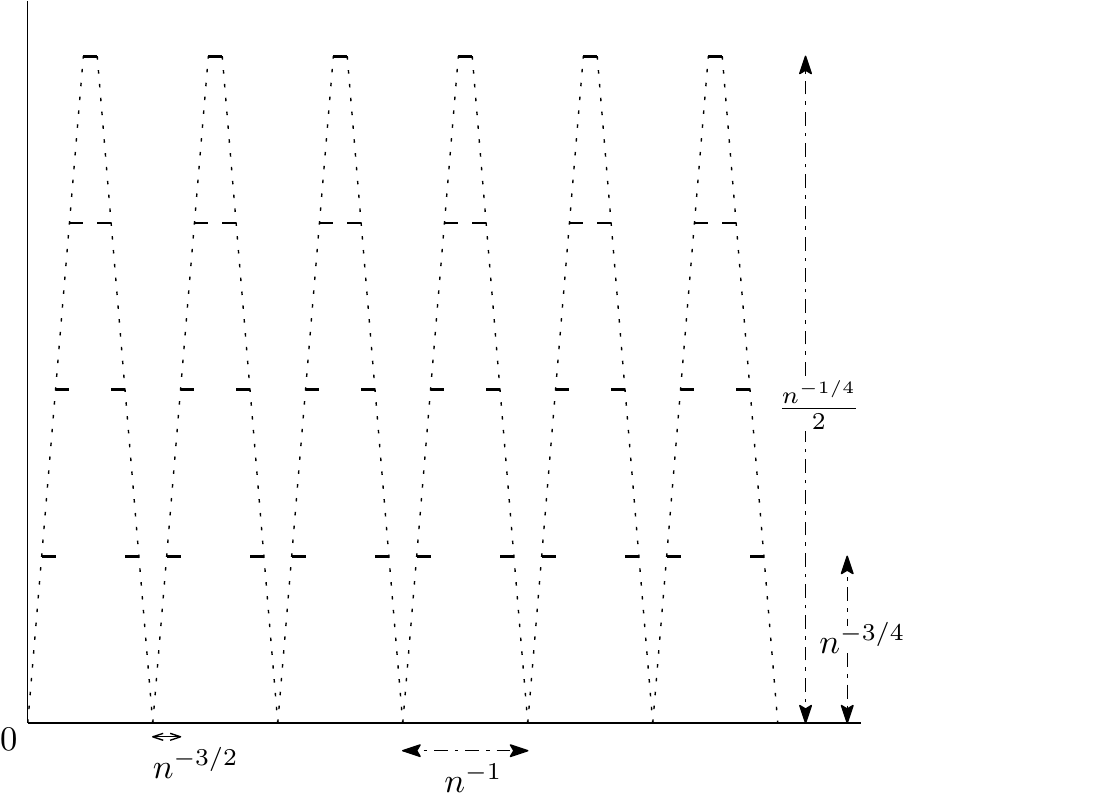, scale=1}
\caption{\label{fig:psin}Graph of $\psi_n$}
\end{center}
\end{figure}

The idea motivating this construction is that the graph of $f$ can be covered efficiently due to the large jumps; $B+f$ interpolates many of these jumps and hence has a larger Minkowski dimension. Moreover, the graph of $B$ can be covered efficiently due to cancellation of the upward and downward movement; adding $f$ to $B$ eliminates much of this cancellation, so that the graph of $B+f$ has Minkowski dimension greater than the graph of $f$.

\begin{claim}
$\updim G_{[0,1]}(f) = \frac{5}{3}$.
\end{claim}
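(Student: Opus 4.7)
The critical scales for $f$ are $\epsilon_k:=n_k^{-3/4}$: at this scale the step height of $\psi_{n_k}$ equals the resolution, and each $\epsilon_k$-column contains $n_k^{1/4}$ full periods of $\psi_{n_k}$, so $\psi_{n_k}$ attains all $\sim \tfrac12\sqrt{n_k}$ of its distinct levels on the column, each spaced exactly $\epsilon_k$ apart. I will establish the matching lower and upper bounds $\updim G_{[0,1]}(f)=5/3$ at these scales.

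\emph{Lower bound $\updim G_{[0,1]}(f)\ge 5/3$.} I would verify that on each $\epsilon_k$-column $I$ the other summands do not spoil this level structure. The slow part $\sum_{j<k}\psi_{n_j}$ is piecewise Lipschitz with total slope at most $\sum_{j<k}n_j^{3/4}\le 2n_{k-1}^{3/4}=2n_k^{1/8}$, so its oscillation on $I$ is at most $2\epsilon_k n_k^{1/8}=2n_k^{-5/8}$; the fast tail $\sum_{j>k}\psi_{n_j}$ has sup norm at most $2n_{k+1}^{-1/4}=2n_k^{-3/2}$. Both bounds are $o(\epsilon_k)$, so on every $\epsilon_k$-column the values of $f$ split into $\sim\tfrac12\sqrt{n_k}$ clusters mutually separated by at least $\epsilon_k-o(\epsilon_k)$. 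Up to universal constants this yields a packing of $G_{[0,1]}(f)$ of size at least $c\, n_k^{3/4}\cdot\sqrt{n_k}=c\, n_k^{5/4}$ at scale $\epsilon_k$, so $\log P(G,\epsilon_k)/\log\epsilon_k^{-1}\to 5/3$.

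\emph{Upper bound $\updim G_{[0,1]}(f)\le 5/3$.} Given small $\epsilon>0$, choose the unique $k$ with $\epsilon_{k+1}\le\epsilon<\epsilon_k$. If $\epsilon\ge 2n_k^{-3/2}$, truncate $f=f_k+T_k$ with $T_k=\sum_{j>k}\psi_{n_j}$; since $\|T_k\|_\infty\le 2n_k^{-3/2}\le\epsilon$, the graph $G_{[0,1]}(f)$ lies in the vertical $\epsilon$-thickening of $G_{[0,1]}(f_k)$. On each $\epsilon$-column $\psi_{n_k}$ takes at most $\sqrt{n_k}+1$ distinct values, and for every $j<k$ the step-width $n_j^{-3/2}$ exceeds $\epsilon_k\ge\epsilon$, so $\psi_{n_j}$ contributes at most one jump per column. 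Therefore $f_k$ attains at most $C\sqrt{n_k}$ distinct values per column, and these form clusters of width $o(\epsilon)$, covering at most $C\sqrt{n_k}$ vertical $\epsilon$-bins. This gives $P(G(f),\epsilon)\le C\sqrt{n_k}/\epsilon$, which on $\epsilon\in[2n_k^{-3/2},\epsilon_k]$ is maximised at $\epsilon=\epsilon_k$ with value $Cn_k^{5/4}\le C\epsilon^{-5/3}$. If instead $\epsilon<2n_k^{-3/2}=2n_{k+1}^{-1/4}$, truncate one level higher (at $K=k+1$) and repeat the argument with $k$ replaced by $k+1$; the maximum now occurs at $\epsilon=\epsilon_{k+1}$ and again gives $P\le C\epsilon^{-5/3}$. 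Hence $\updim G_{[0,1]}(f)\le 5/3$.

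\emph{Main obstacle.} The technical heart is the passage from oscillation-based to level-based counting. The naive bound $P(G(f),\epsilon)\le\epsilon^{-1}\sum_I(\max_I f-\min_I f)$ would give a spurious ratio of $7/4$ at $\epsilon=1/n_k$, because it treats the values of $\psi_{n_k}$ in a column as a continuum of height $n_k^{-1/4}$. In reality this set is a \emph{discrete} union of $\tfrac12\sqrt{n_k}$ horizontal levels spaced $n_k^{-3/4}=\epsilon_k$ apart; for $\epsilon<\epsilon_k$ each level occupies its own $\epsilon$-bin, so the correct per-column count is $\tfrac12\sqrt{n_k}$ rather than $n_k^{-1/4}/\epsilon$. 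Quantifying this discreteness, while controlling the small ``smearing'' produced by the slower summands $\psi_{n_j}$ ($j<k$) and by the high-frequency tail, is the delicate point of the argument.
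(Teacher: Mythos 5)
Your overall strategy (analyse the critical scales $\epsilon_k=n_k^{-3/4}$, separate slow part, active function $\psi_{n_k}$ and tail, and trade off level--counting against range--counting) is the same as the paper's, but two steps do not hold as written. First, in the lower bound your control of the slow part $\sum_{j<k}\psi_{n_j}$ is wrong: this is a \emph{step} function, not a Lipschitz one, and its oscillation on a short interval is governed by its jumps, which for $j=k-1$ have size $n_{k-1}^{-3/4}=n_k^{-1/8}$; moreover even your own claimed bound $2\epsilon_k n_k^{1/8}=2n_k^{-5/8}$ is \emph{not} $o(\epsilon_k)$, since $n_k^{-5/8}=n_k^{1/8}\,\epsilon_k\gg\epsilon_k$. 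So the assertion ``both bounds are $o(\epsilon_k)$'', on which the separation of the clusters rests, is false, and the packing estimate does not follow from what you wrote. The correct replacement is arithmetic, and is exactly the paper's observation: for $j<k$ the jump points of $\psi_{n_j}$ are multiples of $n_j^{-3/2}$ and its values are multiples of $n_j^{-3/4}$, both of which are integer multiples of $\epsilon_k=2^{-(3/4)6^k}$; hence the slow part is constant on each (half-open) grid column of width $\epsilon_k$ and only translates the $\epsilon_k$-spaced level set of $\psi_{n_k}$ by a multiple of $\epsilon_k$. With that substitute, your count of order $n_k^{3/4}\cdot\sqrt{n_k}=n_k^{5/4}$ separated points at scale $\epsilon_k$ is justified.

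Second, your upper bound breaks in case 2. There $\epsilon\in[\epsilon_{k+1},2n_k^{-3/2})=[\,n_{k+1}^{-3/4},2n_{k+1}^{-1/4})$, and ``repeating the argument with $k+1$'' bounds the per-column count by the number of levels of $\psi_{n_{k+1}}$, giving $P\leq C\sqrt{n_{k+1}}/\epsilon$. But $\sqrt{n_{k+1}}/\epsilon\leq C\epsilon^{-5/3}$ is equivalent to $\epsilon\leq C' n_{k+1}^{-3/4}=C'\epsilon_{k+1}$, which is the opposite of the range you are in: at $\epsilon=n_{k+1}^{-1/2}$, say, your bound only gives $P\leq Cn_{k+1}$ while $\epsilon^{-5/3}=n_{k+1}^{5/6}$, so the claimed conclusion does not follow. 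In this regime the level spacing $n_{k+1}^{-3/4}$ is \emph{below} $\epsilon$, so the binding bound is the range-based one you set aside in your ``main obstacle'' paragraph: on each column the values of $\psi_{n_{k+1}}$ lie in an interval of length at most $n_{k+1}^{-1/4}$, hence occupy at most $n_{k+1}^{-1/4}/\epsilon+O(1)$ vertical $\epsilon$-bins, giving $N(\epsilon)\leq C\bigl(n_{k+1}^{-1/4}\epsilon^{-2}+\epsilon^{-1}\bigr)$, whose exponent over $[n_{k+1}^{-3/4},n_{k+1}^{-1/4}]$ is at most $5/3$ (maximal at $\epsilon=\epsilon_{k+1}$). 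This is precisely the paper's second case (``order $n^{-1/4}/\epsilon^2$ boxes''): the level count is correct below the critical scale $n^{-3/4}$ of the active function, the range count above it, and your case 2 sits on the latter side. (A minor point in case 1: $\sqrt{n_k}/\epsilon$ is not ``maximised at $\epsilon=\epsilon_k$'' --- it grows as $\epsilon$ decreases --- but the inequality $\sqrt{n_k}/\epsilon\leq\epsilon^{-5/3}$ does hold throughout that case, so that step is fine once rephrased.)
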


\begin{proof}[{\bf Proof}]
Let $\epsilon = n_\ell^{-3/4}$ and suppose that we want to cover the graph of $f$ with boxes of side length~$\epsilon$. We now argue that the number of boxes needed is up to constants the number of boxes needed to cover the graph of $\psi_{n_\ell}$.

Indeed, since for all $x$
\begin{align}\label{eq:boxes}
\sum_{k=\ell+1}^{\infty} \psi_{n_k}(x) \leq  \frac{1}{2}\sum_{k=\ell+1}^{\infty} \left(2^{6^k}\right)^{-1/4} \leq  c_1\left(2^{6^\ell} \right)^{-3/2},
\end{align}
where $c_1$ is a positive constant, the number of $\epsilon$-boxes needed to cover the graph of $\sum_{k\geq \ell}\psi_{n_k}$ is up to constants the same as the number of $\epsilon$-boxes needed to cover the graph of $\psi_{n_\ell}$.

Note that $n_{k}^{3/2}$ divides $n_{k+1}$ for all $k$.  Also for all $k<\ell$ the function $\psi_{n_{k}}$ is constant on each standard subinterval of length $n_{\ell -1}^{-3/2}$.
Thus on a subinterval of length $n_{\ell-1}^{-3/2}$ adding the functions $\psi_{n_k}$ for $k<\ell$ to the function $\sum_{k=\ell}^{\infty}\psi_{n_k}$ only
shifts the graph of $\sum_{k=\ell}^{\infty}\psi_{n_k}$ by a constant on each standard interval of length $n_{\ell-1}^{-3/2}$. Since we want to cover the graph of $f$ with boxes of side length $n_{\ell}^{-3/4}$ and we can fit an integer number of those in the subinterval of length $n_{\ell-1}^{-3/2}$, it follows that the number of $\epsilon$-boxes needed is the same as the number of $\epsilon$-boxes needed to cover the graph of $\sum_{k=\ell}^{\infty}\psi_{n_k}$.

So it only remains to calculate the number of $\epsilon$-boxes needed to cover the graph of $\psi_{n_\ell}$. By the construction of the function $\psi_{n_\ell}$ it is easy to see that the number of $\epsilon$-boxes needed to cover the graph of $\psi_{n_\ell}$ is of order
$\frac{1}{n_\ell^{-3/4}} \frac{n_\ell^{-1/4}}{n_\ell^{-3/4}} = n_\ell^{5/4}$.
%
%

Therefore the number of boxes of side length
$n_\ell^{-3/4}$ needed to cover the graph of $f$ is of order
$n_{\ell}^{5/4}$. From that it follows that
\[
\updim G(f) \geq \frac{5}{3}.
\]
It remains to show $\updim G(f) \leq \frac{5}{3}$.

Take $\epsilon$ converging to $0$ along powers of $2$. Let $\epsilon = 2^{-r}$ and $k$ be such that
\[
n_k^{-3/2} \leq \epsilon < n_{k-1}^{-3/2}.
\]
We consider two separate cases. To simplify notation we write $n=n_k$.
\begin{itemize}
\item If $\epsilon<n^{-3/4}$, then we need order $ \epsilon^{-1}\sqrt{n}$ boxes to cover the graph of~$f$.
This follows from~\eqref{eq:boxes}, the fact that $\epsilon > n^{-3/2}$ and that $\epsilon$ divides $n_{\ell}^{-3/2}$ for all $\ell<k$.

\item If $\epsilon >n^{-3/4}$, then the number of boxes needed to cover the graph of $\psi_{n}$ is of order $n^{-1/4}/\epsilon^2$. Since the contributions of the other functions in the sum do not matter as discussed above, this is indeed the covering number for the graph of $f$.
\end{itemize}

From the two cases above it follows that $\updim G(f) \leq \frac{5}{3}$ and this concludes the proof.
\end{proof}

\begin{figure}
\begin{center}
\epsfig{file=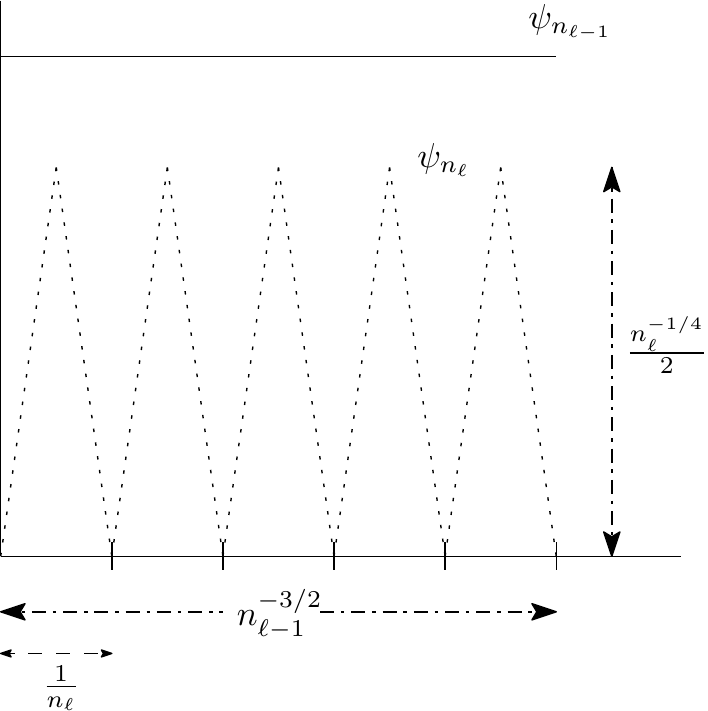, scale=1}
\caption{\label{fig:line}Two scales}
\end{center}
\end{figure}

\begin{claim}
$\updim G(B+f) \geq \frac{7}{4}$ a.s.
\end{claim}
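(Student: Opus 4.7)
The plan is to apply Proposition~\ref{pro:general} with $X_t=(t,B_t)$ and drift $g(t)=(t,f(t))$ at the scale $\epsilon_k=n_k^{-1}=2^{-6^k}$; since $(X+g)(A)$ is bi-Lipschitz equivalent to $G_A(B+f)$, it suffices to prove
\[
\E{V_{X+g}([0,1],\epsilon_k)}\geq c\,\epsilon_k^{1/4}
\]
for a positive constant $c$. Combined with Claim~\ref{cl:minkvolgeneral} (in dimension $2$) and Lemma~\ref{lem:volconc} at dyadic index $6^k$, this yields $\updim G_{[0,1]}(B+f)\geq 2-\tfrac14=\tfrac74$.

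On each period $p_m=[m/n_k,(m+1)/n_k]$, the divisibility and tail arguments used in the previous claim give $f=c_m+\psi_{n_k}+r$, with $c_m$ constant on $p_m$ and $\|r\|_\infty\leq c\,n_k^{-3/2}\ll\epsilon_k$, so the analysis reduces to $B+\psi_{n_k}$ on a period of length $n_k^{-1}$. The heart of the proof is the per-period image-measure bound
\[
\E{\vol\bigl((B+\psi_{n_k})(p_m)\bigr)}\geq c\,n_k^{-1/4},
\]
which I would prove by a second-moment argument. Let $L(y)=\int_{p_m}\1(B(t)+\psi_{n_k}(t)=y)\,dt$ be the occupation density on $p_m$, so that $\int L\,dy=n_k^{-1}$. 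Since the image has positive Lebesgue measure exactly where $L>0$, Cauchy--Schwarz gives $\vol((B+\psi_{n_k})(p_m))\geq n_k^{-2}/\int L^2\,dy$. It thus suffices to show $\E{\int L^2\,dy}\leq C\,n_k^{-7/4}$, after which Markov's inequality delivers the bound in expectation.

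The self-intersection integral factors as $\int_{p_m}\int_{p_m}\rho_{s,t}(0)\,ds\,dt$, with $\rho_{s,t}$ the Gaussian density of $(B+\psi_{n_k})(s)-(B+\psi_{n_k})(t)$. The same-plateau contribution (difference $\mathcal{N}(0,|s-t|)$) integrates to $\sqrt{n_k}\cdot(n_k^{-3/2})^{3/2}=n_k^{-7/4}$; the cross-plateau contribution (difference $\mathcal{N}(h_i-h_j,\tau_{ij})$) carries a Gaussian factor $\exp(-(h_i-h_j)^2/(2\tau_{ij}))$ that decays geometrically in $|i-j|$, and so sums to the same order $O(n_k^{-7/4})$. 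With the per-period bound in hand, the Fubini lower bound $V_{X+g}(p_m,\epsilon_k)\geq c\,\epsilon_k\,\vol((B+f)(p_m))$ summed over every other period (so that the $\epsilon_k$-neighborhoods in $\R^2$ are horizontally disjoint) yields
\[
\E{V_{X+g}([0,1],\epsilon_k)}\geq c\,\epsilon_k\cdot\frac{n_k}{2}\cdot n_k^{-1/4}=c\,\epsilon_k^{1/4},
\]
and Lemma~\ref{lem:volconc} passes this to the desired a.s.\ bound, the concentration factor $6^k$ being negligible compared to $\log(1/\epsilon_k)=6^k\log 2$.

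\textbf{Main obstacle.} The crux is the cross-plateau bound on $\E{\int L^2\,dy}$. It succeeds because the construction deliberately matches the jump size $n_k^{-3/4}$ of $\psi_{n_k}$ with the typical Brownian oscillation $\sqrt{L}=n_k^{-3/4}$ on a plateau of length $L=n_k^{-3/2}$: this gives $(h_i-h_j)^2/(2\tau_{ij})=\Theta(|i-j|)$, making the Gaussian factor geometric. Without this matching the off-diagonal terms would dominate and the $\tfrac74$ lower bound on $\updim G(B+f)$ would be out of reach by this method.
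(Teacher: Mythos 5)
Your route is genuinely different from the paper's and, in outline, it works. The paper argues pathwise at scale $n^{-1}$: it splits $[0,1]$ into blocks of length $n^{-3/2}(1+(\log n)^2)$, uses Gaussian tails, Chernoff and Borel--Cantelli to ensure that on a positive fraction of blocks the Brownian motion rises by order $n^{-3/4}$ while never falling by more than half of the $n^{-3/4}(\log n)^2$ gained by the staircase in between; the vertical ranges over successive good blocks are then essentially disjoint, each contributes order $n^{1/4}$ fresh boxes of side $n^{-1}$, and one counts order $n^{7/4}/(\log n)^2$ boxes directly. You instead feed the example back into the paper's own volume machinery (Claim~\ref{cl:minkvolgeneral}, Lemma~\ref{lem:volconc}, Proposition~\ref{pro:general}) and lower-bound the expected graph-sausage volume by a per-period bound on $\E{\mathrm{Leb}\bigl((B+\psi_{n_k})(p_m)\bigr)}$, proved by a second-moment (self-intersection) estimate for the occupation density. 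The reduction to $B+\psi_{n_k}$ on one period (constancy of the lower-frequency terms by divisibility, tail bound~\eqref{eq:boxes} with $\|r\|_\infty\ll\epsilon_k$), the Fubini slicing $V\geq c\,\epsilon_k\,\mathrm{Leb}(\text{image})$, the Markov step from $\E{\int L^2}\leq Cn^{-7/4}$ together with the deterministic identity $\int L=n^{-1}$, and the transfer to an a.s.\ statement via Lemma~\ref{lem:volconc} are all sound; the only cosmetic repairs are to take every third (not every other) period so the $3\epsilon_k$-wide time supports are genuinely disjoint, and to define $L$ as the occupation density of the occupation measure rather than by the pointwise indicator formula, which is identically zero as written.

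The one substantive flaw sits exactly where you locate the crux. Within one period $\psi_n$ is not monotone: it descends and then ascends, so there are pairs of plateaus $i,j$ on opposite branches with $|h_i-h_j|$ equal to $0$ or a few multiples of $n^{-3/4}$ while the time separation $\tau_{ij}$ is of order $m\,n^{-3/2}$ with $m$ as large as $\sqrt n$. For these pairs $(h_i-h_j)^2/(2\tau_{ij})$ is not $\Theta(|i-j|)$ --- it can be $o(1)$ --- so the Gaussian factor does not decay geometrically in $|i-j|$, and your stated justification for the cross-plateau bound does not cover them. The estimate $\E{\int L^2\,dy}\leq Cn^{-7/4}$ is nevertheless true: for an opposite-branch pair at separation $\tau\approx m\,n^{-3/2}$ the density factor is of order $n^{3/4}m^{-1/2}$, each pair carries area $n^{-3}$, and only about $\sqrt m$ height differences lie inside the Gaussian window $|h_i-h_j|\lesssim\sqrt{\tau}$, so these pairs contribute about $\sum_{m\leq\sqrt n}n^{-3}\cdot n^{3/4}m^{-1/2}\cdot\sqrt m\asymp n^{-7/4}$, the same order as the diagonal and the same-branch terms. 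So the conclusion of your key estimate stands, but its proof must split the off-diagonal sum into same-branch pairs (where the geometric decay in $|i-j|$ is correct) and opposite-branch pairs (where one uses the largeness of $\tau_{ij}$ and counts the heights in the Gaussian window); the blanket claim of geometric decay for all cross-plateau pairs is false, and your closing remark that without it ``the off-diagonal terms would dominate'' is likewise inaccurate --- they do not dominate, but showing this requires the extra case analysis above.
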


\begin{proof}[{\bf Proof}]

Suppose that we want to cover the graph of $B+f$ with boxes of side length $n_\ell^{-1}$. Then arguing as above it is enough to find the number of $n_\ell^{-1}$-boxes needed to cover the graph of $B+\psi_{n_\ell}$.

First we subdivide the interval $[0,1]$ into subintervals of length $n^{-3/2} + (\log n)^2 n^{-3/2}$. Thus the number of such subintervals we obtain is of order $n^{3/2}/(\log n)^2$. For each such subinterval $I_{j,n}$ we write $s_{j,n} = \inf I_{j,n}$ and we define the events
\[
A_n = \left\{\forall j \  \   B_{s_{j,n}}   - \inf_{t\in I_{j,n}} B_t \leq  \frac{n^{-3/4}(\log n)^2}{2}  \right\}.
\]
Using the Gaussian tail estimate gives
\[
\pr{A_n^c} \leq c_1 \frac{n^{3/2}}{(\log n)^2} e^{-c_2 (\log n)^2},
\]
which is summable. Hence by Borel Cantelli we get that almost surely for all $n$ sufficiently large we get that in none of the intervals $I_{j,n}$ Brownian motion goes down by more than $n^{-3/4}(\log n)^2/2$.

We now look at the first part of these subintervals $\til{I}_{j,n}$ of length $n^{-3/2}$ and we define the event
\[
\til{A}_{j,n} = \left\{ n^{-3/4} \leq \sup_{t \in \til{I}_{j,n}} B_t - B_{\til{s}_{j,n}}     \leq 2n^{-3/4} \right\},
\]
where $\til{s}_{j,n} = \inf \til{I}_{j,n}$.
Then there exists a constant $c\in (0,1)$ so that for all $j$ and $n$
\[
\pr{\til{A}_{j,n}} \geq c.
\]
The events $(\til{A}_{j,n})_j$ are independent by the independence of the increments of Brownian motion. Using the Chernoff bound for Bernoulli random variables we obtain for a positive constant $c_3<1$
\[
\pr{\sum_{j=1}^{n^{3/2}} \1(\til{A}_{j,n}) \geq \frac{cn^{3/2}}{4}} \geq 1- c_3^{n^{3/2}}.
\]
Thus applying Borel Cantelli again we deduce that almost surely for all $n$ large enough at least $cn^{3/2}/4$ of the events $\til{A}_{j,n}$ will happen.

We now take $n$ sufficiently large so that $A_n$ holds and at least $cn^{3/2}/4$ of the events $\til{A}_{j,n}$ occur.

We set $n=n_\ell$ and we consider the subintervals of length $n^{-3/2}$ that correspond to the events $\til{A}_{j,n}$ that occur. In each of these subintervals the function $\psi_{n}$ is constant, and by the definition of the event $\til{A}_{j,n}$, it is easy to see that the number of boxes of side $n^{-1}$ needed to cover the graph of~$B+\psi_{n}$ in this time interval is at least of order $n^{-3/4}/n^{-1} = n^{1/4}$. Next we skip a time interval of length~$n^{-3/2} (\log n)^2$. Since the event $A_n$ holds, during this time interval the Brownian motion did not go down by more than $(\log n)^2 n^{-3/4} /2$. At the same time the function~$f$ increased by $n^{-3/4} (\log n)^2$. So it follows that we need at least of order $n^{1/4} n^{3/2}/(\log n)^2$ boxes of side length $n^{-1}$ to cover the graph of $B+\psi_{n}$. Therefore we deduce that a.s.
\[
\updim G(B+f) \geq \frac{7}{4}.
\]
\end{proof}

%
%
%
%

\begin{remark}
\rm{
A modification of the example yields a continuous function $f$ and a closed set $A$ in $[0,1]$ so that
\[
\updim G_A(f) = \frac{5}{3} \ \text{ and } \ \updim G_A(B+f) \geq \frac{7}{4}.
\]
}
\end{remark}

\section*{Acknowledgement}

We thank Yimin Xiao for providing useful references.

\bibliographystyle{plain}
\bibliography{biblio}

\end{document}